\newtheorem{Theorem}{Theorem}
\newtheorem{Lemma}[Theorem]{Lemma}
\newtheorem{Remark}[Theorem]{Remark}
\newtheorem{Claim}[Theorem]{Claim}
\newtheorem{Fact}[Theorem]{Fact}
\newcommand{\cJ}{\mathcal{J}}
\newcommand{\cE}{\mathcal{E}}
\newcommand{\cC}{\mathcal{C}}
\newcommand{\cT}{\mathcal{T}}
\newcommand{\cM}{\mathcal{M}}
\newcommand{\cQ}{\mathcal{Q}}
\newcommand{\riz}{r_{i,z}}
\newcommand{\biz}{b_{i,z}}
\newcommand{\miz}{m_{i,z}}
\newcommand{\dhknp}{{H_k(n,p_1,p_2)}}
\newcommand{\conditional}{\; \middle\vert \;}
\begin{document}
\title{Jigsaw percolation on random hypergraphs}
\thanks{Accepted for publication by the Applied Probability Trust (http://www.appliedprobability.org) in the Journal of Applied Probability~54.4. The first author is supported by the National Science Foundation (NSF): DMS~1600742. The second, third, and fourth authors are supported by Austrian Science Fund (FWF): P26826 and W1230, Doctoral Program ``Discrete Mathematics''. The fourth author is also supported by European Research Council (ERC): No.~639046.}
\author[B.~Bollob\'as]{B\'ela Bollob\'as}
\address{Department of Pure Mathematics and Mathematical Statistics, University of Cambridge, Wilberforce
Road, Cambridge CB3 0WA, UK, and Department of Mathematical Sciences, University
of Memphis, Memphis, TN 38152, USA, and London Institute for Mathematical
Sciences, 35a South Street, London W1K 2XF, United Kingdom.}
\email{b.bollobas@dpmms.cam.ac.uk}
\author[O.~Cooley]{Oliver Cooley}
\author[M.~Kang]{Mihyun Kang}
\address{\hspace{-.03cm}Institute of Discrete Mathematics, Graz University of Technology, Steyrergasse~30, 8010~Graz, Austria.}\email{\{cooley,kang\}@math.tugraz.at} 
\author[C.~Koch]{Christoph Koch}
\address{Mathematics Institute, University of Warwick, Zeeman Building, CV4 7AL Coventry, United Kingdom.}\email{c.koch@warwick.ac.uk}
\keywords{Jigsaw percolation; random graph; hypergraph; high order connectedness; breadth-first search}
\subjclass[2010]{05C80(primary), 60K35;05C65(secondary)}

\begin{abstract}
The jigsaw percolation process on graphs was introduced by
Brummitt, Chatterjee, Dey, and Sivakoff~\cite{BrummittChatterjeeDeySivakoff15} as a model of collaborative solutions of
puzzles in social networks. Percolation in this process may be viewed as the joint
connectedness of two graphs on a common vertex set.

Our aim is to extend a result of Bollob\'as, Riordan, Slivken, and Smith~\cite{BollobasRiordanSlivkenSmith17} concerning
this process to hypergraphs for a variety of possible definitions of connectedness.
In particular, we determine the asymptotic order of the critical threshold probability for percolation when both hypergraphs are chosen binomially at random.
\end{abstract}

\maketitle
\vspace{-.4cm}

\section{Introduction and main result} %\renewcommand\thefootnote{\arabic{footnote}}\setcounter{footnote}{0}
Jigsaw percolation on graphs was introduced by Brummitt, Chatterjee, Dey,
and Sivakoff~\cite{BrummittChatterjeeDeySivakoff15} as a model for interactions
within a social network.
It was inspired by collaboration networks and the idea that advances are often
the result of a group of people combining their ideas, skills or knowledge
to achieve better results than would have been possible individually,
as seen in scientific research
(see~\cite{Ball14,BarabasiJeongNedaRavaszSchubergVicsek02,Newman01a,Newman01b,Tebbe11}), among other fields.

Jigsaw percolation models such collaboration as a
collective puzzle-solving process.
The premise is that each of $n$ people has a piece of a puzzle which must be combined
in a certain way to solve the puzzle.
Using the model, Brummit, Chatterjee, Dey, and Sivakoff~\cite{BrummittChatterjeeDeySivakoff15}
were able to show that some networks are better than others at solving
certain puzzles, and that this is fundamentally due to the contrasting
properties of the network. This motivates both a solid understanding of
the jigsaw process and detailed knowledge of the social network in order
to optimise performance and problem-solving. 

In the model there are two possibly overlapping sets of edges coloured red and blue
defined on a common set of vertices. (In particular, any pair of vertices may form
both a red and a blue edge at the same time.) Jigsaw percolation is a
deterministic process on {\em clusters} of vertices that evolves in discrete time.
Initially, each vertex forms its own cluster and in each subsequent time-step
two clusters merge if they are joined by at least one edge of each colour.
The process stops once no two clusters can be merged. The jigsaw process {\em percolates}
if we end in a single cluster.
In particular, the process cannot percolate if either of the graphs given by blue or red edges is not connected.

More generally, given integers $1\le r \le s$, define {\em $(r,s)$-jigsaw percolation} as follows.
Let $G_1, \dots , G_s$
be graphs on the same vertex set $V$. At each discrete time $t=0, 1, \dots $,
we have a partition of $V$ into {\em clusters}. At time $t=0$ this is the
finest partition: every vertex forms its own cluster. At time $t$,
let ${\widetilde G}_t$ be the graph whose vertices are the clusters,
with two vertices joined by an edge if the corresponding clusters are joined by an edge in
at least $r$ of the graphs $G_i$, $1\le i\le s$. 
The clusters of the jigsaw process at time $t+1$ are the unions of the clusters
that belong to the same component of ${\widetilde G}_t$. The process {\em percolates} if eventually we arrive at a single cluster.
Note that a $(1,s)$-process  percolates iff the union of the graphs $G_i$ is connected
(and in particular the $(1,1)$ process on a graph $G$ percolates iff $G$ is connected).
On the other hand, if we have $(s,s)$-percolation, then each $G_i$ must be connected.
However, the connectedness of each $G_i$ is far from sufficient
for $(s,s)$-percolation. Thus we may view percolation of the $(s,s)$ process as
a notion of the ``joint connectedness'' of the graphs $G_1,\ldots,G_s$, which requires
some interaction between them rather than merely connectedness of each $G_i$ or information
about their union.
So far, only $(2,2)$-jigsaw percolation has been considered:
in most of this paper, we shall do the same (and drop $(2,2)$ from the notation),
but we shall consider $(s,s)$-jigsaw percolation for larger $s$ in Section~\ref{sec:related}. 

Returning to the motivation, the blue graph may represent a \emph{puzzle graph} of how the pieces of the puzzle may be combined to reach a solution, while the red graph may represent the \emph{people graph}, modelling friendships between the people who hold the puzzle pieces.

Brummitt, Chatterjee, Dey, and Sivakoff~\cite{BrummittChatterjeeDeySivakoff15} studied the model when the red graph is the binomial random graph and with various deterministic possibilities for the blue graph, including a Hamilton cycle, or other connected graphs of bounded maximum degree, and provided upper and lower bounds for the percolation threshold probabilities.

Gravner and Sivakoff~\cite{GravnerSivakoff14} improved on these results for many different puzzle graphs of bounded degree, and also introduced a generalised process with redundancy parameters in the number of neighbours required for clusters to merge.

The setting in which both graphs are binomial random graphs was studied by Bollob\'as, Riordan, Slivken, and Smith~\cite{BollobasRiordanSlivkenSmith17}, who determined the asymptotic order of the threshold for percolation in terms of
the product of the two associated probabilities.

Before we state their result, we set the scene. Let $G(n,p_1)$ and $G(n,p_2)$ denote the pair of random graphs on the (common) vertex set $[n]$ (for $a\in \mathbb{N}$, we define $[a]:=\{1,\dots,a\}$), where each edge is present independently with probability $p_1$ or $p_2$ respectively. Throughout the paper any \emph{unspecified} asymptotic is with respect to $n\to\infty$ and in particular we use the phrase \emph{with high probability}, abbreviated to \emph{whp}, to mean with probability tending to $1$ as $n\to\infty$. For a probability distribution $\mathcal{P}$, we use the notation $X\sim \mathcal{P}$ to mean that $X$ is a random variable with distribution $\mathcal{P}$. The main result of Bollob\'as, Riordan, Slivken, and Smith~\cite{BollobasRiordanSlivkenSmith17} is the following.

\begin{Theorem}[\cite{BollobasRiordanSlivkenSmith17}]\label{thm:graphs}
Let $0\le p_1=p_1(n),p_2=p_2(n)\le 1$ and let $G_1\sim G(n,p_1)$ and $G_2\sim G(n,p_2)$ independently. Then there exists a constant $c>0$ such that:
\begin{enumerate}
\item \label{thm:graphs:subcrit} if $p_1p_2\le \frac{1}{cn\log n}$, then whp $(G_1,G_2)$ does not percolate;
\item \label{thm:graphs:supercrit} if $p_1p_2\ge \frac{c}{n\log n}$ and $\min\{p_1,p_2\}\ge \frac{c\log n}{n}$, then whp $(G_1,G_2)$ percolates.
\end{enumerate}
\end{Theorem}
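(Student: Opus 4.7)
For the subcritical direction (part 1), the plan is to exhibit whp a vertex that remains a singleton cluster throughout the process. The starting observation is that the ``double graph'' $H:=G_1\cap G_2$ has the distribution of $G(n,p_1p_2)$, and that any pair of vertices joined by an edge of $H$ merges at time $t=1$; hence the singleton clusters immediately after the first step are exactly the isolated vertices of $H$. Under the hypothesis $p_1p_2\le 1/(cn\log n)$, the expected number of such isolated vertices is at least $n(1-p_1p_2)^{n-1}=n^{1-o(1)}$, and a standard second-moment computation gives concentration. The substantive task is to show that a positive proportion of these isolated vertices never merge in later steps. Fixing an isolated vertex $v$ with red- and blue-neighbourhoods $R_v,B_v$ (which are disjoint in this event), I would bound the probability that the jigsaw process run on $V\setminus\{v\}$ ever produces a cluster meeting both $R_v$ and $B_v$; by a first-moment argument exploiting the independence of $G_1$ and $G_2$, and re-exposing only the edges incident to $v$ at the end, this probability can be shown to be bounded away from $1$ for a suitable choice of $c$.

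For the supercritical direction (part 2), the plan is to show that a giant cluster forms and then absorbs everything. The assumption $\min\{p_1,p_2\}\ge c\log n/n$ guarantees that each $G_i$ is whp connected with minimum degree $\Omega(\log n)$. I would proceed in three phases: (i) \emph{nucleation}: the double graph $H\sim G(n,p_1p_2)$ is too sparse to contain a giant component, but already after the first jigsaw step there are many clusters of logarithmic size; (ii) \emph{bootstrap}: a cluster of size $k$ has, in expectation, $\Theta(k^2(n-k)p_1p_2)$ vertices in its complement carrying both a red and a blue edge to it, so a carefully controlled sequence of merges grows the largest cluster past the critical size $k\sim(p_1p_2)^{-1/2}$; (iii) \emph{completion}: once the dominant cluster exceeds this threshold a single jigsaw step pulls in a positive fraction of the remaining vertices, and after $O(\log\log n)$ further steps only $o(n)$ stragglers remain. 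These stragglers are then mopped up using the $\Omega(\log n)$ minimum-degree bound in each $G_i$ combined with a union bound over small sets of vertices.

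The principal technical difficulty in both directions is the separation of the randomness of $G_1$ and $G_2$ from the deterministic but history-dependent cluster partition. The clusters at any time are measurable with respect to the full pair $(G_1,G_2)$, so estimating merger probabilities requires either staged revelation of edges or conditioning on a suitably large exposed subgraph followed by a monotonicity argument. In the subcritical regime this is needed to rule out indirect mergers of an isolated vertex via large clusters formed elsewhere; in the supercritical regime it is what lets each bootstrap step treat the edges between the growing cluster and its unexposed complement as fresh random variables. I expect phase (ii) of the supercritical analysis to be the real bottleneck, since the cluster must traverse a range of sizes where the expected number of new absorbable vertices is small but non-negligible, and variance bounds have to be good enough to prevent the growth from stalling.
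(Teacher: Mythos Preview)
This theorem is not proved in the paper at all: it is quoted from \cite{BollobasRiordanSlivkenSmith17} and used only as a black box (it is the base case $S(2,1)$ in the supercritical induction, Remark~\ref{prop:graph}). So there is no ``paper's own proof'' to compare your proposal against.

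That said, the paper's Section~\ref{sec:subcrit}, specialised to $k=2$, $j=1$, does reprove part~(i), and its method is completely different from yours. The paper (and \cite{BollobasRiordanSlivkenSmith17}) do \emph{not} try to exhibit a vertex that stays a singleton. Instead they argue via a bottleneck: if the process percolates, then at some moment a cluster of size between $\log n$ and $2\log n$ must be internally spanned (Claim~\ref{claim:bottleneck}), and a first-moment bound over all such witnesses kills this.

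Your subcritical plan has a genuine gap. You correctly observe that $v$ remains a singleton iff no final cluster of the process on $V\setminus\{v\}$ meets both $R_v$ and $B_v$. But to bound the probability that some cluster meets both, you need a priori control on the \emph{sizes} of the clusters in $V\setminus\{v\}$: if a cluster $C$ has $|C|\gg 1/p_1$ and $|C|\gg 1/p_2$, it will meet both $R_v$ and $B_v$ with probability close to $1$ once you re-expose the edges at $v$. Nothing in your argument rules out large (but not spanning) clusters --- and indeed ``no large cluster forms'' is much stronger than ``no percolation'', so you cannot bootstrap from the conclusion you are trying to prove. The ``first-moment argument exploiting independence'' you allude to has no identified summand; as written it is circular. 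The internally-spanned-set approach avoids this entirely by bounding a combinatorial quantity that does not depend on the global cluster structure.

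Your supercritical sketch (nucleation, bootstrap, completion) is broadly the strategy of \cite{BollobasRiordanSlivkenSmith17}, and you have correctly located the hard step (phase~(ii)). The present paper does not reprove this; it reduces the hypergraph supercritical statement to the graph case and invokes Theorem~\ref{thm:graphs} directly.
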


Our main aim in this paper is to extend this result to a hypergraph setting.

\subsection{Notation and model}\label{sec:notation}
Following Erd\H{o}s,  we denote by $V^{(i)}$ the set of $i$-element subsets of a set $V$ and call its elements {\em $i$-sets}.

{\bf Hypergraphs and high-order connectedness.}  Given an integer $k\ge 2$,
a \emph{$k$-uniform hypergraph} (abbreviated to \emph{$k$-graph}) $H$ consists of a set
$V=V(H)$ of {\em vertices} and a set $E=E(H)$ of {\em edges}, where $E \subset V^{(k)}$.
(Thus for $k=2$ this defines a graph.) There are several natural possibilities
for the concept of connectedness of a hypergraph: here we define some of them.
Given an integer $1\le j \le k-1$, we say that two distinct {\em $j$-sets} $J,J'\in V^{(j)}$
are \emph{$j$-tuple-connected} in $H$, abbreviated to \emph{$j$-connected},
if there is a sequence of edges $e_1,\ldots,e_m\in E$ such that
\begin{itemize}
\item $J \subset e_1$ and $J'\subset e_m$;
\item for all $1\le i \le m-1$, some $j$-set $J_i \in V^{(j)}$ is contained in $e_i \cap e_{i+1}$.
\end{itemize}
In other words, we may walk from $J$ to $J'$ using edges which consecutively intersect
in at least $j$ vertices. Furthermore any $j$-set is always $j$-connected to itself.
(For a $j$-set which is not contained in any edge, this would not be implied by the
previous definition.) A \emph{$j$-component} of $H$ is a maximal subset of $V^{(j)}$
such that any two of its elements are $j$-connected in $H$. We say that $H$ is
\emph{$j$-connected} if all the $j$-sets of $V^{(j)}$ lie in a single $j$-component.
Note that in the case $k=2$ and $j=1$ this is simply the usual definition of connectedness (and the vertex set of a
component) of a graph.

{\bf High-order jigsaw percolation.} A \emph{double $k$-graph} $(V,E_1,E_2)$ consists of a set $E_1\subset V^{(k)}$ of \emph{red} edges and a set $E_2\subset V^{(k)}$ of \emph{blue} edges on a common set $V$ of vertices. We call the $k$-graphs $(V,E_1)$ and $(V,E_2)$ the \emph{red $k$-graph} and the \emph{blue $k$-graph} respectively.

Let $(V,E_1,E_2)$ be a double $k$-graph, let $\cJ\subset V^{(j)}$, and let $\cC_0$ be a partition of $\cJ$ into $\kappa_0:=|\cC_0|$ partition classes. (Unless stated otherwise, $\cC_0$ will be the partition in which every $j$-set forms its own partition class and so $\kappa_0=|\cJ|$.) Then \emph{$j$-jigsaw percolation} is a deterministic, discrete time process which is characterised by a sequence of partitions $\{\cC_t\}_{t=0,1,\dots}$ of $\cJ$. We write $\kappa_t:=|\cC_t|$ for the number of partition classes at time $t$ and denote the partition classes by $\cC_{t,1},\dots,\cC_{t,\kappa_t}$. The partition $\cC_{t+1}$ is obtained from $\cC_t$ as follows:
\begin{enumerate}[(I)]
\item We define an auxiliary graph ${\widetilde G}_t$ on the vertex set $[\kappa_t]$, where the edge $\{i,i'\}$ is present if and only if there are $j$-sets $J_{i,1},J_{i,2}\in\cC_{t,i}$ and $J_{i',1},J_{i',2}\in\cC_{t,i'}$ and edges $e_1\in E_1$ and $e_2\in E_2$ such that $J_{i,1}\cup J_{i',1}\subset e_1$ and $J_{i,2}\cup J_{i',2}\subset e_2$.
\item If ${\widetilde G}_t$ is an empty graph, then STOP.
\item Otherwise we set $\kappa_{t+1}$ to be the number of components of ${\widetilde G}_t$
and denote these components by $\widetilde C_1,\dots,\widetilde C_{\kappa_{t+1}}$. The partition
$\cC_{t+1}$ is then given by  
$$\cC_{t+1,i}=\bigcup_{w\in \widetilde C_i}\cC_{t,w}\qquad \textrm{for}\qquad 1\le i\le \kappa_{t+1}.$$
%for $1\le i\le \kappa_{t+1}$.
\item If $\kappa_{t+1}=1$, then STOP.
\item Otherwise we proceed to time $t+1$.
\end{enumerate}
If the process stopped in step (IV), we say that \emph{$\cJ$ percolates on $(V,E_1,E_2)$};
otherwise, i.e.\ if the process stopped in step (II), we say that \emph{$\cJ$
does not percolate on $(V,E_1,E_2)$}. We refer to the partition classes $\cC_{t,i}$ as \emph{clusters}.

The main results in this paper deal with the setting when
$\cJ=V^{(j)}$.
In this case we simply say that \emph{$(V,E_1,E_2)$ $j$-percolates} or \emph{$(V,E_1,E_2)$
does not $j$-percolate}. The generalised version is required in the proofs of our results.

Throughout the paper we will ignore floors and ceilings whenever this does not significantly affect the argument.

 \subsection{Main result}

In this paper we consider the \emph{random binomial double $k$-graph} $\dhknp$
on vertex set $[n]$ where every $k$-set is present as a red edge with probability $p_1$
and present as a blue edge with probability $p_2$, independently of each other
and of all other $k$-sets.

Similarly to the graph case, a necessary condition for $j$-jigsaw percolation on a
double $k$-graph is that both the red and the blue $k$-graphs are $j$-connected.
In this setting the red/blue $k$-graph is a copy of the \emph{binomial random $k$-graph}
$H_k(n,p_1)$ or $H_k(n,p_2)$ respectively, i.e.\ the vertex set is $[n]$ and each
$k$-set is present independently with probability $p_1$ or $p_2$ respectively. 
It was shown in~\cite{CooleyKangKoch15b} that the (sharp) threshold for $j$-connectedness
in $H_k(n,p)$ is
\begin{equation}\label{eq:connthresh}
p_{conn}=p_{conn}(n):=\frac{j\log n}{\binom{n}{k-j}}.
\end{equation}
As in Theorem~\ref{thm:graphs}, we impose a slightly stronger condition on $\min\{p_1,p_2\}$ to ensure the $j$-connectedness in the supercritical regime. We prove the following extension of Theorem~\ref{thm:graphs}.
\begin{Theorem}\label{thm:main}
For integers $1\le j \le k-1$,  let $0\le p_1=p_1(n),p_2=p_2(n)\le 1$ and let $H\sim\dhknp$. Then there exists a constant $c=c(k,j)>0$ such that
\begin{enumerate}
\item \label{thm:main:subcrit} if $p_1p_2\le \frac{1}{cn^{2k-2j-1}\log n}$, then whp $H$ does not $j$-percolate;
\item \label{thm:main:supercrit} if $p_1p_2\ge \frac{c}{n^{2k-2j-1}\log n}$ and $\min\{p_1,p_2\}\ge \frac{c\log n}{n^{k-j}}$, then whp $H$ $j$-percolates.
\end{enumerate}
\end{Theorem}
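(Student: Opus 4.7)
For the subcritical part~\ref{thm:main:subcrit}, my strategy is a first-moment argument showing that whp the process fails to $j$-percolate, by establishing that too many $j$-sets remain in singleton clusters. For a fixed $j$-set $J$ to ever leave its singleton cluster, some chain of merges must occur; the simplest such chain (a direct merge at time~$0$) requires a partner $j$-set $J'$ together with both a red and a blue $k$-edge containing $J \cup J'$, which happens with probability roughly $p_1 p_2$ per candidate pair $J'$. More generally, the witness may be an indirect merge via intermediate $j$-sets. Enumerating all minimal witnesses carefully and using the assumption $p_1 p_2 \le \frac{1}{c n^{2k-2j-1} \log n}$, I expect the expected number of $j$-sets leaving their initial singleton cluster to be $o\bigl(\binom{n}{j}\bigr)$, so whp at least two $j$-sets remain isolated and percolation fails. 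The $\log n$ factor in the denominator of the threshold corresponds to a coupon-collector-type phenomenon in the corresponding connectedness problem for the auxiliary graph $\widetilde G_0$.

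For the supercritical part~\ref{thm:main:supercrit}, I plan to use a BFS-style exploration of the cluster containing a fixed $j$-set $J_0$, combined with a sprinkling (two-round exposure) argument. Split $p_l = p_l' + p_l''$ for $l \in \{1,2\}$ with $p_l''$ small, reserving the $p_l''$-edges for a final round and using the $p_l'$-edges in the main exploration. The exploration then proceeds in phases: first, show the cluster of $J_0$ grows from size~$1$ to $\omega(1)$ in $O(1)$ rounds, exploiting that $p_1 p_2$ exceeds the threshold by a constant factor; second, show multiplicative (or otherwise fast) growth of the cluster until it contains almost all $j$-sets; finally, use the $p_l''$-sprinkling to absorb any remaining $j$-sets. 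The last step crucially uses the lower bound $\min(p_1,p_2) \ge c \log n/n^{k-j}$, which by~(\ref{eq:connthresh}) guarantees $j$-connectedness of both the red and the blue $k$-graphs, so that every $j$-set has many potential merge partners. Taking a union bound over the choice of $J_0$ (or applying the argument to an arbitrary but fixed $J_0$ and then concluding by symmetry) yields that whp the final partition is trivial.

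The hardest step will be controlling the cluster growth in the expansion phase. In hypergraphs the probability of merging two clusters depends on the distribution of $j$-sets within each cluster, and on their pairwise intersections, not merely on the cluster sizes, so one must track the combinatorial shape of the revealed hypergraph carefully; in particular, estimates of merge probabilities need the right weighting over intersection patterns $|J\cap J'|\ge \max(0,2j-k)$. A further complication is that the same $k$-edge can contribute to many potential merges, introducing dependencies that I expect to handle via a finer sprinkling or by concentration inequalities applied conditionally on a \emph{typical} revealed structure. A secondary technical point is matching the estimates at the phase transitions so that the conclusion of each phase feeds correctly into the next; this is where I anticipate the combinatorial bookkeeping to be the most delicate.
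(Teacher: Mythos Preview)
Your subcritical plan has a genuine gap. Showing that most $j$-sets remain in singleton clusters is neither the right obstruction nor obviously true: in the interesting regime both $p_1,p_2$ may sit well above the $j$-connectedness threshold while $p_1p_2$ is still subcritical, and then a positive fraction of $j$-sets can merge in early rounds even though percolation ultimately fails. The paper's argument is a \emph{bottleneck} at size $\Theta(\log n)$: if $H$ $j$-percolates, then by revealing edges one at a time some cluster must at some point have size in the window $[\log n,\binom{k}{j}\log n]$, and hence $H$ contains an \emph{internally spanned triple} $(\cJ_0,\cE_1,\cE_2)$ of that size. One then relaxes to \emph{edge-minimal traversable triples}, encodes each such triple by a BFS from a root $j$-set together with a pair of small ``blueprint'' matrices recording how many red and blue edges of each duty were used, and bounds the number of triples of size $\ell$ with $r$ red and $b$ blue edges by roughly $n^j C^{\ell}(n^{k-j})^r(\ell n^{k-j-1})^b$. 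A first-moment calculation over $\log n\le \ell\le \binom{k}{j}\log n$ then gives the result; the $\log n$ in the threshold is the bottleneck size, not a coupon-collector effect in $\widetilde G_0$. Your ``enumerating all minimal witnesses carefully'' is exactly the hard part, and without the bottleneck-at-$\log n$ idea and the traversable-triple enumeration there is no way to control arbitrarily long witness chains.

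For the supercritical part your BFS-plus-sprinkling outline is plausible but is not what the paper does, and the difficulties you flag (tracking cluster ``shape'', intersection patterns, edge reuse) are real. The paper avoids all of this by an inductive \emph{reduction to the graph case}: first $S(2,1)\Rightarrow S(k,1)$ by splitting $[n]$ into halves $Q,Q^*$ and forming an auxiliary double graph on $Q$ whose edges are pairs lying in a $k$-edge with all other vertices in $Q^*$; then $S(k-1,j-1)\Rightarrow S(k,j)$ via link double $(k-1)$-graphs at each vertex, followed by one round of sprinkling to absorb the few ``exceptional'' vertices whose links fail to $(j-1)$-percolate. This buys a short proof that uses Theorem~\ref{thm:graphs} as a black box; your approach, if it can be made to work, would essentially reprove the graph result in the harder hypergraph setting.
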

In other words, the threshold $\hat{p}=\hat{p}(n)$ for $j$-jigsaw percolation on $\dhknp$ in terms of the product $p=p_1p_2$ is of order
$$
\hat{p}=\Theta\left(\frac{1}{n^{2k-2j-1}\log n}\right).
$$

Somewhat surprisingly, the subcritical case (Theorem~\ref{thm:main}~\ref{thm:main:subcrit})
does not seem to be easy to prove. The corresponding subcritical case for graphs
(i.e.\ $k=2$ and $j=1$) was almost trivial, but the general case requires significantly
more involved analysis.

On the other hand, the supercritical case (Theorem~\ref{thm:main}~\ref{thm:main:supercrit})
becomes much easier; we prove a neat reduction to the graph case.

The methods that we apply in this paper can also be used, with minimal additional work,
to prove some generalisations of Theorem~\ref{thm:main} and related results.
We present these results and outlines of the proofs in Section~\ref{sec:related}.

We will prove the subcritical case of Theorem~\ref{thm:main} in Section~\ref{sec:subcrit},
and the supercritical case in Section~\ref{sec:supercrit}

\section{Subcritical regime: proof of Theorem~\ref{thm:main}~\ref{thm:main:subcrit}}\label{sec:subcrit}

Let $1\le j\le k-1$ be integers. As a first step  we develop necessary conditions
for any double $k$-graph $H$ on vertex set $V$ to $j$-percolate. Then, in the second step,
we show that if $H\sim\dhknp$ where $p_1p_2\le \frac{1}{cn^{2k-2j-1}\log n}$,
then whp it fails to satisfy the weakest of these necessary conditions.

\subsection{Necessary conditions for $j$-percolation}
The initial idea is to modify the algorithm (given by~(I)--(V) in
Section~\ref{sec:notation}) slightly, so that for each cluster it additionally
keeps track of two sets of edges, one red and one blue, which allow us to obtain
this cluster by a sequence of merges.

In this spirit we consider triples of the form $(\cJ_0,\cE_1,\cE_2)$ where
$\cJ_0\subset V^{(j)}$,  $\cE_1\subset V^{(k)}$, and $\cE_2\subset V^{(k)}$,
i.e.\ $\cE_1$ and $\cE_2$ are embedded into the complete double $k$-graph
$\left(V,V^{(k)},V^{(k)}\right)$. We call the edges in $\cE_1$ \emph{red}
and those in $\cE_2$  \emph{blue};
note that edges may be red and blue at the same time. The \emph{size} of a triple
is given by $|\cJ_0|$.  Furthermore, we call $(\cJ_0,\cE_1,\cE_2)$ \emph{internally spanned}
if $\cJ_0$ percolates on the double $k$-graph $(V,\cE_1,\cE_2)$. Note that these
triples play a crucial role for $j$-jigsaw percolation on double $k$-graphs
(comparable to \emph{internally spanned sets} in~\cite{BollobasRiordanSlivkenSmith17}).
\begin{Claim}\label{claim:bottleneck}
For every positive integer $N\le \binom{|V|}{j}/\binom{k}{j}$, if a double $k$-graph $H$ $j$-percolates, then it must contain an internally spanned triple of size $\ell$ for some $N \le \ell \le \binom{k}{j}N$.
\end{Claim}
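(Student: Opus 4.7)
The plan is to run a pairwise-merge refinement of the jigsaw process and then pick out the first cluster whose size crosses the threshold $N$. Specifically, any percolating run of the process described by (I)--(V) can be simulated by a sequence of \emph{pairwise merges}, where at each step we replace two current clusters $A,B$ by $A\cup B$ whenever there exist $j$-sets $J_{A,1},J_{A,2}\in A$ and $J_{B,1},J_{B,2}\in B$ and edges $e_1\in E_1$, $e_2\in E_2$ with $J_{A,1}\cup J_{B,1}\subset e_1$ and $J_{A,2}\cup J_{B,2}\subset e_2$. Indeed, each component $\widetilde C$ of the auxiliary graph $\widetilde G_t$ has a spanning tree, and processing its edges one at a time yields pairwise merges which remain valid under this refinement because cluster classes only grow, so the witness $j$-sets inside the original $\cC_{t,i}$ persist inside their enlargements.

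Next, I would track cluster sizes through this refined sequence. Initially every cluster has size $1$, and at the end of the (refined) percolating run a single cluster of size $\binom{|V|}{j}\ge \binom{k}{j}N$ exists. Let $A,B$ be the first pair whose merge produces a cluster $C=A\cup B$ of size at least $N$. Then $|A|,|B|<N$, so
\[
N\;\le\; |C|\;=\;|A|+|B|\;<\;2N\;\le\;\binom{k}{j}N,
\]
using that $\binom{k}{j}\ge 2$ whenever $1\le j\le k-1$. This $C$ will be the desired $\cJ_0$.

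Finally, the witnessing triple $(C,\cE_1,\cE_2)$ is built by induction on the refined merging history. Singletons are trivially internally spanned. If $A$ and $B$ are internally spanned by triples $(A,\cE_1^A,\cE_2^A)$ and $(B,\cE_1^B,\cE_2^B)$, and they merge via $e_1,e_2$, then setting $\cE_1=\cE_1^A\cup\cE_1^B\cup\{e_1\}$ and $\cE_2=\cE_2^A\cup\cE_2^B\cup\{e_2\}$ gives a triple on which the inherited merges of $A$ and $B$ can be run in parallel to recover these two clusters, after which $(e_1,e_2)$ merges them into $C$. The only point requiring any real care is the validity of the pairwise-merge refinement; once monotonicity of cluster classes under refinement is noted, the bound on $|C|$ and the inductive construction of the internally spanning edge sets are both elementary.
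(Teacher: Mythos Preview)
Your proof is correct. Both you and the paper refine the jigsaw process into smaller steps so as to catch the first moment a cluster crosses size $N$, but the refinements differ. The paper reveals the edges of $H$ one at a time and, after each arrival, merges those clusters that share a $j$-set with the new edge (using already-revealed edges of the other colour as partners); since a single $k$-edge contains at most $\binom{k}{j}$ many $j$-sets, each step combines at most $\binom{k}{j}$ clusters, which gives the bound $\ell\le\binom{k}{j}N$ directly. You instead refine all the way down to \emph{pairwise} merges via spanning trees of the auxiliary graphs $\widetilde G_t$, obtaining the sharper bound $\ell<2N$ and invoking $\binom{k}{j}\ge 2$ only to match the stated inequality. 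Your inductive construction of $(\cE_1,\cE_2)$ is just a more explicit version of the paper's ``take all red and blue edges used so far''. Either refinement works; yours yields a tighter intermediate estimate, while the paper's edge-by-edge viewpoint foreshadows the later analysis in Section~\ref{sec:blueprint}, where edges are again processed one at a time.
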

\begin{proof}
We shall think of the edges of $H$ arriving one at a time: whenever an edge arrives, we check whether it can be used
to merge some clusters. Note that each edge $e$ that arrives can be used to merge a cluster $C$ with some others if
$e$ contains some $j$-set of $C$. Thus with the arrival of an edge, at most $\binom{k}{j}$ clusters
will be merged. Therefore, when the first cluster of size at least $N$ forms,
it was created by merging at most $\binom{k}{j}$ clusters, each of size at most $N-1$,
and therefore it has size at most $\binom{k}{j}N$.

Then the elements of our internally spanned triple consist of the set $\cJ$ of $j$-sets
in the cluster and the sets $\cE_1, \cE_2$ of red and blue edges respectively.  
Note that this may contain many more edges than necessary, but certainly $\cJ$ percolates
on $\cE_1$ and $\cE_2$, and therefore $(\cJ,\cE_1,\cE_2)$ is an internally spanned triple
of the required size.
\end{proof}

Counting all possible internally spanned triples of a given size does not seem easy.
Instead, we consider a relaxation of internally spanned triples.
To do so, we first define a strengthened notion of $j$-connectedness on $k$-graphs.
Let $(V,E)$ be a $k$-graph and let $\cJ\subset V^{(j)}$ be a collection of $j$-sets.
We say that a subset $\cJ^*\subset\cJ$ is \emph{$\cJ$-traversable} (in $(V,E)$) if
for every two distinct $j$-sets $J,J' \in\cJ^*$,
there is a sequence
of edges $e_1,\ldots,e_m\in E$ such that
\begin{itemize}
\item $J \subset e_1$ and $J'\subset e_m$;
\item for all $1\le i \le m-1$, some $j$-set $J_i \in \cJ$ is contained in $e_i \cap e_{i+1}$.
\end{itemize}
In other words, we may walk from $J$ to $J'$ using edges such that the intersection of
two consecutive edges contains at least one $j$-set from $\cJ$. Furthermore, any singleton
$\{J\}\subset\cJ$ is $\cJ$-traversable. To shorten notation, a collection
$\cJ$ is called \emph{traversable} if it is $\cJ$-traversable. Note that
$V^{(j)}$-traversable collections of $j$-sets are precisely
$j$-connected collections. Now we say that a pair $(\cJ,\cE)$ is \emph{traversable} if $\cJ$
is traversable in the $k$-graph $(V,\cE)$. Furthermore, a triple $(\cJ_0,\cE_1,\cE_2)$ is
\emph{traversable} if both $(\cJ_0,\cE_1)$ and $(\cJ_0,\cE_2)$ are traversable, i.e.\ if
$\cJ_0$ is traversable in both the red and blue $k$-graphs, $(V,\cE_1)$ and $(V,\cE_2)$ respectively.
\begin{Fact}\label{fact:trav}
Every internally spanned triple is traversable and thus contains an edge-minimal traversable triple.
\end{Fact}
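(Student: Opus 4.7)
The plan is to prove traversability by induction on the time step $t$ of the $j$-jigsaw percolation process applied to $\cJ_0$ on $(V,\cE_1,\cE_2)$, establishing the stronger statement that at each time $t$ every cluster $\cC_{t,i}$ is $\cJ_0$-traversable in both the red $k$-graph $(V,\cE_1)$ and the blue $k$-graph $(V,\cE_2)$. Since $(\cJ_0,\cE_1,\cE_2)$ is internally spanned, the process ends with a single cluster equal to $\cJ_0$, whence traversability of the triple follows at once.

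The base case $t=0$ is immediate: every cluster is a singleton $\{J\}$ and singletons are $\cJ_0$-traversable by definition. For the inductive step, fix a cluster $\cC_{t+1,i}=\bigcup_{w\in\widetilde C_i}\cC_{t,w}$ where $\widetilde C_i$ is a component of $\widetilde G_t$, and pick distinct $J,J'\in\cC_{t+1,i}$, say $J\in\cC_{t,w_0}$ and $J'\in\cC_{t,w_s}$. Select a path $w_0,w_1,\dots,w_s$ in $\widetilde G_t$; for each $\ell$ the definition of $\widetilde G_t$ supplies a red edge $e^{(\ell)}\in\cE_1$ together with $j$-sets $L_\ell\in\cC_{t,w_\ell}\subset\cJ_0$ and $L'_{\ell+1}\in\cC_{t,w_{\ell+1}}\subset\cJ_0$ with $L_\ell\cup L'_{\ell+1}\subset e^{(\ell)}$. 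By the induction hypothesis, inside $\cC_{t,w_0}$ we may connect $J$ to $L_0$ by a red walk whose consecutive intersections contain a $j$-set of $\cJ_0$; similarly we may connect $L'_\ell$ to $L_\ell$ inside each intermediate $\cC_{t,w_\ell}$ and $L'_s$ to $J'$ inside $\cC_{t,w_s}$. Concatenating these walks with the edges $e^{(0)},\dots,e^{(s-1)}$ produces a red walk from $J$ to $J'$; the junctions are valid because the last edge of the walk in $\cC_{t,w_\ell}$ contains $L_\ell\in\cJ_0$, which also lies in $e^{(\ell)}$, and analogously $L'_{\ell+1}\in\cJ_0$ lies in both $e^{(\ell)}$ and the first edge of the walk inside $\cC_{t,w_{\ell+1}}$. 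The identical argument with blue edges gives $\cJ_0$-traversability in $(V,\cE_2)$.

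For the second half of the statement, observe that once we know $(\cJ_0,\cE_1,\cE_2)$ is traversable, we may iteratively discard an edge from $\cE_1$ or $\cE_2$ whenever doing so leaves the resulting triple traversable. Since $|\cE_1|+|\cE_2|$ is finite and strictly decreases at each step, this procedure terminates at an edge-minimal traversable triple contained in $(\cJ_0,\cE_1,\cE_2)$. The only subtlety in the whole argument is the book-keeping in the inductive step to verify that every ``junction'' $j$-set used in the concatenated walk indeed belongs to $\cJ_0$ rather than just to some subcluster; this is what makes the stronger inductive statement (traversability with respect to the full $\cJ_0$, not only within individual clusters) the natural quantity to carry through the induction.
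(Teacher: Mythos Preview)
Your proof is correct. The paper actually states Fact~\ref{fact:trav} without proof, treating it as evident from the definitions; your induction on the time step of the percolation process is precisely the natural way to spell out the details, and your care in noting that the junction $j$-sets $L_\ell,L'_{\ell+1}$ lie in the sub-clusters (and hence in $\cJ_0$) is exactly the point that makes the concatenation valid.
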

Similarly to internally spanned triples, the \emph{size} of a traversable triple $(\cJ,\cE_1,\cE_2)$
is defined to be $|\cJ|$.
Let us denote the set of all edge-minimal traversable triples of size $\ell$
within the complete double $k$-graph on vertex set $V$ by $\cT_\ell$. Furthermore, we partition
$\cT_\ell$ according to the number of red and blue edges: for all integers $\ell,r,b$, we define
$$
\cT_{\ell,r,b}:=\left\{(\cJ_0,\cE_1,\cE_2)\in\cT_\ell \; \text{\large $\mid$} \; |\cE_1|=r \text{ and } |\cE_2|=b \right\}.
$$

To summarise, we have established a necessary condition for percolation of
an arbitrary double $k$-graph $H$: by Claim~\ref{claim:bottleneck} and
Fact~\ref{fact:trav} the existence of a triple $T\in \cT_{\ell,r,b}$ of size
$\log n\le \ell\le \binom{k}{j}\log n$ is necessary for $H$ to percolate.
Aiming for a first moment argument, we will therefore provide an upper bound on $| \cT_{\ell,r,b}|$.

\subsection{Edge-minimal traversable triples}
Our next goal is to establish a general upper bound on $|\cT_{\ell,r,b}|$ in an
arbitrary double hypergraph. This will be done in three steps:
firstly, we observe that for every $T\in \cT_{\ell,r,b}$, its number of edges
is bounded in terms of its size (Section~\ref{sec:edges});
secondly, we characterise every $T\in \cT_{\ell,r,b}$ by a pair of matrices,
which we call its \emph{blueprint} (Section~\ref{sec:blueprint});
and thirdly, we use these matrices to algorithmically construct a superset
of $\cT_{\ell,r,b}$ and bound its size from above (Section~\ref{sec:triples}).

\subsubsection{Number of edges.}\label{sec:edges}
Crucially, the number of edges in each edge-minimal traversable triple is bounded
as a function of its size.
\begin{Claim}\label{claim:maxEdges}
Every edge-minimal traversable triple of size $1\le\ell\le \binom{|V|}{j}$ contains at most $\ell-1$ edges of each colour. In particular, $|\cT_{\ell,r,b}|=0$ unless $0\le r,b\le \ell-1$.
\end{Claim}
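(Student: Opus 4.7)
The plan is to decouple the two colour classes, reduce each to a connectedness problem on an auxiliary hypergraph, and finish by an induction. For the first reduction, observe that if $(\cJ_0,\cE_1,\cE_2)$ is edge-minimally traversable then deleting any $e\in\cE_1$ must destroy the triple's traversability; since this deletion leaves the blue pair $(\cJ_0,\cE_2)$ untouched, it is the red pair $(\cJ_0,\cE_1)$ that loses traversability. Hence $(\cJ_0,\cE_1)$ is itself edge-minimally traversable, and likewise for $(\cJ_0,\cE_2)$. It therefore suffices to show that any edge-minimally traversable pair $(\cJ_0,\cE)$ with $|\cJ_0|=\ell$ satisfies $|\cE|\le\ell-1$.

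For the second reduction, I would associate to $(\cJ_0,\cE)$ the auxiliary hypergraph $\mathcal{A}$ on vertex set $\cJ_0$ whose hyperedges are $S(e):=\{J\in\cJ_0 : J\subset e\}$ for $e\in\cE$. Unwinding the definition, a sequence of edges witnessing $\cJ_0$-traversability for a pair $J,J'\in\cJ_0$ corresponds exactly to a walk through the hyperedges of $\mathcal{A}$ connecting $J$ to $J'$, so traversability of $(\cJ_0,\cE)$ is equivalent to connectedness of $\mathcal{A}$. Edge-minimality then forces $\mathcal{A}$ to be a simple hypergraph whose hyperedges all have size at least $2$ and whose removal of any single hyperedge disconnects it: any two $e\ne e'$ with $S(e)=S(e')$ or any $e$ with $|S(e)|\le 1$ would be redundant, and the failure of minimality upon deleting $e\in\cE$ matches precisely the failure of connectedness upon deleting $S(e)$ from $\mathcal{A}$.

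What remains is the clean hypergraph statement: every edge-minimally connected hypergraph $\mathcal{A}$ on $\ell$ vertices has at most $\ell-1$ hyperedges. I would prove this by induction on the number $r$ of hyperedges. The case $r=0$ forces $\ell\le 1$ and is trivial. For $r\ge 1$, pick any hyperedge $h\in\mathcal{A}$; by edge-minimality, $\mathcal{A}-h$ splits into components $C_1,\ldots,C_t$ with $t\ge 2$. Each induced sub-hypergraph $\mathcal{A}_i$ on $V(C_i)$ is again edge-minimally connected, for if some $h'\ne h$ were redundant in some $\mathcal{A}_i$ then $\mathcal{A}\setminus\{h'\}$ would remain connected (via the surviving hyperedges within each $C_j$ and via $h$ between the $C_j$), contradicting the minimality of $\mathcal{A}$. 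Applying the inductive hypothesis to each $\mathcal{A}_i$ and summing yields
\[
r-1=\sum_{i=1}^{t}|E(\mathcal{A}_i)|\le\sum_{i=1}^{t}\bigl(|V(C_i)|-1\bigr)=\ell-t\le\ell-2,
\]
which gives $r\le\ell-1$, as required. The main thing to get right is the chain of equivalences in the two reductions, and in particular that edge-minimality transfers correctly from the coloured triple down to the auxiliary simple hypergraph; once that translation is set up, the induction itself is routine.
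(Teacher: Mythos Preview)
Your proof is correct and follows essentially the same approach as the paper: reduce from the triple to a single-colour edge-minimal traversable pair, remove an arbitrary edge, observe that the structure splits into at least two pieces, and apply the induction hypothesis to each piece. The only cosmetic differences are that you pass through an explicit auxiliary hypergraph on $\cJ_0$ and induct on the number of edges, whereas the paper works directly with the pair $(\cJ,\cE)$ and inducts on $|\cJ|$; the summation $\sum_i |E(\mathcal{A}_i)| \le \sum_i (|V(C_i)|-1)$ is exactly the paper's $|\cE|-1 \le |\cJ|-m$.
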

\begin{proof}
By the definition of traversable triples it suffices to prove that every edge-minimal traversable pair $(\cJ,\cE)$ of size $1\le |\cJ|\le \binom{|V|}{j}$ satisfies $|\cE|\le |\cJ|-1$.

We proceed by induction over $|\cJ|$; for $|\cJ|=1$ the statement is trivial.
So let $|\cJ|\ge2$ and assume the statement holds for all $1\le i\le |\cJ|-1$.
Let $e\in\cE$ be an arbitrary edge. By edge-minimality the collection $\cJ$
is not traversable in the $k$-graph $(V,\cE\setminus\{e\})$, i.e.\ there exists
a partition $\cJ=X_1\dot{\cup}\dots\dot{\cup}X_m$ with $m\ge 2$ and
$1\le|X_1|,\dots,|X_m|\le |\cJ|-1$ such that no edge $e'\in\cE\setminus\{e\}$
contains a $j$-set from more than one partition class. Moreover, by edge-minimality
each edge in $\cE$ contains at least one $j$-set from $\cJ$, and thus we obtain
a partition of the edge set $\cE\setminus\{e\}=Y_1\dot{\cup}\dots\dot{\cup}Y_m$
by setting $Y_i:=\left\{e'\in\cE\mid \exists J\in X_i\colon J\subset e'\right\}$.
Since $(X_i,Y_i)$ is an edge-minimal traversable pair\footnote{If for some
$e^*\in Y_i$ the pair $(X_i,Y_i\setminus\{e^*\})$ was traversable, then
$(\cJ,\cE\setminus\{e^*\})$ would be traversable as well, contradicting edge-minimality.}
and $|X_i|\le |\cJ|-1$ we have by the induction hypothesis $|Y_i|\le |X_i|-1$ and
by summing over all $1\le i\le m$ we obtain $|\cE|-1\le |\cJ|-m\le |\cJ|-2$, as required.
\end{proof}

\subsubsection{Blueprints.}\label{sec:blueprint}
Next, in order to extract further structural information from edge-minimal
traversable triples, we fix an arbitrary pair $\sigma = (\sigma_j,\sigma_k)$
of total orders. Here $\sigma_j$ is an order on  $V^{(j)}$ and $\sigma_k$ on
$V^{(k)}$. Clearly, $\sigma$ induces a total order on each of the sets $\cJ_0$,
$\cE_1$, and $\cE_2$.

Given a triple $T=(\cJ_0,\cE_1,\cE_2)\in\cT_{\ell,r,b}$, we explore $\cJ_0$
via a breadth-first search (BFS) using only red edges, and colouring $j$-sets white
once they have been discovered. More precisely, we study the following
exploration process starting from the minimal element $J_{(1)}$ of $\cJ_0$
with respect to $\sigma$. Initially, we colour $J_{(1)}$ white and
make it active. In each subsequent step, there is an active
$j$-set, say $J$, and we consider all edges of $\cE_1$ containing $J$ which
have not been considered previously. For each of these edges in turn
(according to $\sigma$), we consider all the $j$-sets of $\cJ_0$ that it
contains and colour them white, if they are not yet coloured white, in the order
given by $\sigma$. We have then finished exploring $J$ and move on to the
next $j$-set which was coloured white in this process. Since $\cJ_0$ is
traversable in $(V,\cE_1)$, this induces a new total order on $\cJ_0$ which we call
the \emph{BFS-order} of $T$ (with respect to $\sigma$) and denote by
$\tau_\sigma=\tau_{\sigma}(T):=(J_{(1)},\dots,J_{(|\cJ_0|)})$.
(Note that $\tau_{\sigma}$ will in general be different from the order induced by $\sigma$ on $\cJ_0$.)

Additionally, we introduce \emph{marks} on the $j$-sets of $\cJ_0$. As we shall see,
marking a $j$-set in the blue process is similar to colouring it white in the
red process. Initially, $J_{(1)}$ is marked and all other $j$-sets are unmarked.
We then go through $\cJ_0$ according to $\tau_{\sigma}$. In the $i$-th step,
i.e.\ when $J_{(i)}$ is active,
we reveal all blue edges that contain $J_{(i)}$ and none of $\{J_{(i+1)},\dots,J_{(|\cJ_0|)}\}$,
one by one according to $\sigma$. Whenever we reveal a blue edge in this way,
we mark all the still unmarked $j$-sets in $\cJ_0$ that it contains.

The reason for colouring and marking the $j$-sets in $\cJ_0$ is that it allows us
to additionally keep track of two sets of parameters,
$\riz$ and $\biz$, for $1\le i\le |\cJ_0|$ and $0\le z\le\binom{k}{j}$. We say that
a red (or blue) edge performs \emph{$z$-duty} if we colour (respectively mark)
precisely $z$ many $j$-sets when it is revealed. Then $\riz$ denotes the number
of red $z$-duty edges that were revealed while $J_{(i)}$ was active.
Similarly, $\biz$ denotes the number of blue $z$-duty edges
that were revealed when $J_{(i)}$ was coloured white.
We encode this information in matrices $R:=(\riz)_{i,z}$ and $B:=(\biz)_{i,z}$
and call the pair $\pi_\sigma(T):=(R,B)$ the
\emph{blueprint} of $T$ with respect to $\sigma$.

For our upcoming arguments we introduce some notation for such matrices. Given a positive integer $a$ and an $a\times\left(\binom{k}{j}+1\right)$ matrix $M=(\miz)_{i,z}$ with non-negative integer entries, we define
$$
 f(M):=\sum_{i=1}^{a}\sum_{z=0}^{\binom{k}{j}} z\miz \qquad\text{and}\qquad
g(M):= \sum_{i=1}^{a}\sum_{z=0}^{\binom{k}{j}} \miz .
$$
Furthermore, for a non-negative integer $m$, we define
$$
\cM_{a,m}:=\left\{M\in\mathbb{Z}_{\ge 0}^{a\times \left(\binom{k}{j}+1\right)}\conditional f(M)=a-1 \ \ \text{ and }\ \  g(M)=m \right\}.
$$
(We shall only ever use these definitions with $a=\ell$ and $m$ either $b$ or $r$.)
The intuition behind these definitions is the following: $f$ allows us to reconstruct
the total number of white (or marked) $j$-sets (apart from $J_{(1)}$)
from the blueprint of a triple, while $g$ provides the number of red
(respectively blue) edges. More formally, this is stated in the following lemma.
\begin{Lemma}\label{lem:existBP}
Let $\ell>r,b\ge 0$ be integers and $\sigma$ be a pair of total orders on $V^{(j)}$ and $V^{(k)}$. For any $T=(\cJ_0,\cE_1,\cE_2)\in\cT_{\ell,r,b}$, the blueprint $\pi_\sigma(T)=(R,B)$ satisfies
\begin{itemize}
\item $f(R)=f(B)=|\cJ_0|-1=\ell-1;$
\item $g(R)=|\cE_1|=r$;
\item $g(B)=|\cE_2|=b$.
\end{itemize}
 In other words, $\pi_\sigma\left(\cT_{\ell,r,b}\right)\subset\cM_{\ell,r}\times\cM_{\ell,b}$.
\end{Lemma}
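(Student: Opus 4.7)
The statement is essentially a bookkeeping check: once we show every edge of $\cE_1$ and $\cE_2$ is revealed exactly once, and every $j$-set other than $J_{(1)}$ is coloured white (respectively marked) exactly once, the four identities follow directly from the definitions of $f$ and $g$ and of $z$-duty. The plan is therefore to verify these four ``exactly once'' statements.

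\textbf{Step 1: $g(R)=r$ and $g(B)=b$.} I first argue that edge-minimality of $T$ forces every edge of $\cE_1\cup\cE_2$ to contain at least one $j$-set of $\cJ_0$, by the same argument used in the proof of Claim~\ref{claim:maxEdges}: an edge disjoint (as a $k$-set) from $\cJ_0$ could be removed without destroying traversability. Given this, for the red BFS, any edge $e\in\cE_1$ is considered in the step corresponding to the $\tau_\sigma$-smallest $j$-set of $\cJ_0$ contained in $e$, and never again (because all edges ``considered previously'' are excluded). For the blue process, an edge $e\in\cE_2$ is revealed in the step $i$ where $J_{(i)}$ is the $\tau_\sigma$-largest $j$-set of $\cJ_0$ contained in $e$: this is precisely the unique step at which $e$ contains $J_{(i)}$ but none of $J_{(i+1)},\dots,J_{(\ell)}$. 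Hence every edge is revealed exactly once, giving $g(R)=|\cE_1|=r$ and $g(B)=|\cE_2|=b$.

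\textbf{Step 2: $f(R)=\ell-1$.} Since $\cJ_0$ is traversable in $(V,\cE_1)$, the red BFS starting from $J_{(1)}$ discovers every $j$-set of $\cJ_0$; indeed $\tau_\sigma$ is well-defined precisely because of traversability. By definition, a $j$-set is coloured white only if it is ``not yet coloured white'', so each $J_{(i)}$ with $i\ge 2$ is coloured white exactly once (when it is first encountered inside some edge during the BFS), while $J_{(1)}$ is coloured white at initialisation and never by an edge. Summing $z$ over all red $z$-duty edges counts exactly the white-colourings performed by edges, so $f(R)=\ell-1$.

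\textbf{Step 3: $f(B)=\ell-1$.} The analogous statement for marks is the only place where a small extra argument is needed, which I expect to be the main (mild) obstacle. Each $J_{(i)}$ with $i\ge2$ is marked at most once, since only unmarked $j$-sets are marked. For the lower bound I need that every $J_{(i)}$ with $i\ge 2$ actually gets marked. Traversability of $(\cJ_0,\cE_2)$ guarantees the existence of some blue edge $e\in\cE_2$ containing $J_{(i)}$; let $i^\ast\ge i$ be the largest index with $J_{(i^\ast)}\in e$. Then $e$ is revealed in step $i^\ast$ by Step~1, and at that moment $J_{(i)}$ is marked unless it has already been marked earlier. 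Either way, $J_{(i)}$ is marked exactly once. Summing $z$ over all blue $z$-duty edges gives $f(B)=\ell-1$.

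The three steps together yield $\pi_\sigma(T)\in\cM_{\ell,r}\times\cM_{\ell,b}$, since both matrices have non-negative integer entries of the correct shape and satisfy the required $f$- and $g$-identities.
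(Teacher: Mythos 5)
Your proof is correct and follows essentially the same route as the paper's: both arguments reduce the four identities to the facts that (by edge-minimality) every edge contains a $j$-set of $\cJ_0$ and hence is revealed exactly once, and that (by traversability plus the ``not yet white/unmarked'' conditions) every $j$-set other than $J_{(1)}$ is coloured, respectively marked, exactly once. Your write-up merely spells out the bookkeeping (which step each edge is revealed in) in more detail than the paper does.
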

\begin{proof}
Let $T=(\cJ_0,\cE_1,\cE_2)$ and recall that $\cJ_0$ contains precisely $\ell-1$ many $j$-sets
excluding $J_{(1)}$ (which is already initially both white and marked) and is traversable in
both the red and blue $k$-graphs $(V,\cE_1)$ and $(V,\cE_2)$ respectively. Therefore,
each $j$-set in $\cJ_0\setminus\{J_{(1)}\}$ is coloured white by at least one red edge
and likewise receives a mark from at least one blue edge.
On the other hand, no $j$-set is coloured or marked more than once. Since $f(R)$ counts the number
of $j$-sets coloured white by red edges and $f(B)$ counts the number of marks given by blue edges, the first statement follows.

On the other hand, recall that $T$ contains $r$ red edges and $b$ blue edges. Since $T$ is edge-minimal, any edge, either red or blue, was revealed in the process and thus was counted in precisely one of the $\riz$ and one of the $\biz$. Thus the second and third statements follow.
\end{proof}

Next we give a (crude) upper bound on the number of such matrices.
\begin{Claim}\label{claim:nbMat}
Let $a>m\ge 0$ be integers. If $\cM_{a,m}\neq\emptyset$, then
$$
m\ge \binom{k}{j}^{-1}(a-1).
$$
Furthermore, there is a constant $C'=C'(k,j)>0$ (independent of $a,m$) such that
$$
|\cM_{a,m}|\le (C')^{a-1}.
$$
\end{Claim}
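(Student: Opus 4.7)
Part~1 is immediate from the definitions. Every entry $m_{i,z}$ of a matrix $M \in \cM_{a,m}$ satisfies $0 \le z \le \binom{k}{j}$, so
$$ a-1 \;=\; f(M) \;=\; \sum_{i,z} z\, m_{i,z} \;\le\; \binom{k}{j} \sum_{i,z} m_{i,z} \;=\; \binom{k}{j}\, g(M) \;=\; \binom{k}{j}\, m, $$
which rearranges to the claimed inequality.

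For part~2, the plan is to prove the even cruder bound obtained by \emph{ignoring} the constraint $f(M) = a-1$ and using only $g(M) = m$. Since $M$ is an $a \times (\binom{k}{j}+1)$ non-negative integer matrix whose entries sum to $m$, a standard stars-and-bars count gives
$$ |\cM_{a,m}| \;\le\; \binom{m + N - 1}{m}, \qquad \text{where } N := a\left(\binom{k}{j}+1\right). $$
The hypothesis $a > m$ gives $m \le a-1$, so applying the crude estimate $\binom{n}{\ell} \le 2^n$ yields
$$ |\cM_{a,m}| \;\le\; 2^{m + N - 1} \;\le\; 2^{\,a(\binom{k}{j}+2) - 2}. $$
To repackage this in the form $(C')^{a-1}$, I will take $C' := 4^{\binom{k}{j}+2}$ (which depends only on $k$ and $j$) and verify that $(a-1)\log_2 C' \ge a(\binom{k}{j}+2) - 2$ for all $a \ge 2$; this reduces to $2(a-1) \ge a$, which holds throughout. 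The boundary case $a = 1$ is trivial, since $f(M) = 0$ then forces $m_{1,z} = 0$ for all $z \ge 1$, and combined with $m = 0$ (forced by $m < a = 1$) we get $|\cM_{1,0}| = 1 = (C')^{0}$.

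I expect no genuine obstacle here: this is a straightforward counting estimate, and the only point requiring a line of care is fixing a constant $C' = C'(k,j)$ that absorbs the exponent $a(\binom{k}{j}+2)-2$ into $(a-1)\log_2 C'$ uniformly in $a$. In particular, the $f$-constraint plays no role in this bound, which is wasteful but sufficient; a tighter estimate could be obtained by using both constraints, but is not needed to establish the stated form.
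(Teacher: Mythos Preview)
Your proof is correct and follows essentially the same approach as the paper: Part~1 is identical, and for Part~2 both arguments drop the $f$-constraint and apply a stars-and-bars count on matrices with entry sum $m$, then convert to a bound of the form $(C')^{a-1}$ using $m\le a-1$. The only cosmetic difference is that the paper first invokes Part~1 to bound the number of cells by $O(m)$ and then estimates $\binom{O(m)}{m}\le C^m$, whereas you bound $m+N-1$ directly by $O(a)$ and use $\binom{n}{\ell}\le 2^n$; either route yields the claim.
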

Note that for edge-minimal traversable triples, certainly the number of $j$-sets is larger
than the number of edges of each colour by Claim~\ref{claim:maxEdges}. Hence the condition $a>m$ is fulfilled whenever we want to apply this result in this paper.
\begin{proof}
For the first statement let $M$ be any matrix in $\cM_{a,m}$. Then
\begin{equation}\label{eq:bounda}
a-1=f(M)=\sum_{i=1}^a \sum_{z=0}^{\binom{k}{j}}z\miz \le \binom{k}{j}\sum_{i=1}^a \sum_{z=0}^{\binom{k}{j}}\miz = \binom{k}{j}m.
\end{equation}
For the second statement, first note that all entries of a matrix in
$\cM_{a,0}$ must be zero, hence $|\cM_{a,0}|\le 1$. Therefore let us
assume $m\ge 1$. Choosing an arbitrary $a\times \left(\binom{k}{j}+1\right)$
matrix $M$ with non-negative integer entries satisfying $g(M)=m$ can be
seen as having a set of $m$ indistinguishable elements and partitioning it into
$a\left(\binom{k}{j}+1\right)\stackrel{\eqref{eq:bounda}}{\le} 2m\binom{k}{j}^2$ potentially empty
distinguishable partition classes.
Thus, (using the fact that there are $\binom{t+s-1}{t-1}=\binom{t+s-1}{s}$ ways
of partitioning $s$ indistinguishable elements into $t$ distinguishable classes)
we certainly obtain the upper bound
\begin{align*}
|\cM_{a,m}|\le\binom{2\binom{k}{j}^2m+m-1}{m}&\le \left(\frac{3e\binom{k}{j}^2m}{m}\right)^{m}\le \left(9\binom{k}{j}^2\right)^{a-1},
\end{align*}
as claimed.
\end{proof}

\subsubsection{Triples obtained from blueprints.}\label{sec:triples}
Given $\ell$, $r$ and $b$, we provide an upper bound on $|\cT_{\ell,r,b}|$.
To this end, we construct a superset of $\cT_{\ell,r,b}$: we first choose a blueprint
$(R,B)\in \cM_{\ell,r}\times\cM_{\ell,b}$ and then construct all possible triples
$(\cJ_0,\cE_1,\cE_2)$ with $|\cJ_0|=\ell$, $|\cE_1|=r$, and $|\cE_2|=b$
(within the complete double $k$-graph) such that its collection of white $j$-sets $\cJ_0$
is traversable in the red $k$-graph $(V,\cE_1)$ (we do not demand that $\cJ_0$ is traversable
in the blue $k$-graph $(V,\cE_2)$, which means we will be overcounting).
More precisely, we construct $(\cJ_0,\cE_1,\cE_2)$
using the following algorithm \textbf{WR}-\textbf{B}
consisting of two phases: a white-and-red phase \textbf{WR}
followed by a blue phase \textbf{B}. Both \textbf{WR} and \textbf{B} terminate after $\ell$ steps,
or if at any point we do not have a valid choice; in the latter case,
the current instance of the procedure is discarded.

\vspace{.2cm}
\begin{tabular}{ll}
\hspace{7ex}\textbf{Algorithm WR-B}&  \\
\hspace{15ex}{\tt Input}: & $(R,B)\in \cM_{\ell,r}\times\cM_{\ell,b}$.\\
\hspace{15ex}{\tt Output}: & $(\cJ_0,\cE_1,\cE_2)\subset V^{(j)}\times V^{(k)}\times V^{(k)}$.\vspace{0.3cm}\\
\end{tabular}

\begin{tabular}{ll}
\hspace{7ex}\textbf{Phase WR} &  \\
\hspace{15ex}{\tt Input}: & $R \in \cM_{\ell,r}$.\\
\hspace{15ex}{\tt Output}: & $(\cJ_0,\cE_1)\subset V^{(j)}\times V^{(k)}$ and a total order $\vartheta$ on $\cJ_0$.\vspace{0.3cm}\\
\end{tabular}

\begin{tabular}{ll}
\hspace{7ex}\textbf{Phase B} & \\
\hspace{15ex}{\tt Input}: & $(\cJ_0,\cE_1)$ and $\vartheta$ from Phase \textbf{WR}, $B\in \cM_{\ell,b}$.\\
\hspace{15ex}{\tt Output}: & $\cE_2 \subset V^{(k)}$.\\
\end{tabular}
\vspace{.2cm}

We first run Phase \textbf{WR} as follows. Initially, we choose an arbitrary $j$-set $J_{(1)}\in V^{(j)}$ and colour it white,
meaning that $\cJ_0\leftarrow\{J_{(1)}\}$ and set $J_{(1)}\prec_\vartheta J_{(1)}$.
No red edges are chosen yet, so $\cE_1\leftarrow\emptyset$.  
Then in step $i=1,\dots, \ell$, we consider the $j$-set $J_{(i)}\in\cJ_0$ to be active.
For each $z=1,\dots,\binom{k}{j}$ and each $y=1,\dots,\riz$ we do the following: 
 	\begin{itemize}
 	\item choose an edge $e_{i,z,y}\in V^{(k)}$ containing $J_{(i)}$;
 	\item if $e_{i,z,y}\not\in\cE_1$, then $\cE_1\leftarrow\cE_1\cup e_{i,z,y}$; otherwise, discard the instance;
 	\item  for $x=1,\dots, z$
 		\begin{itemize}
 		\item we choose a $j$-set $J_{i,z,y,x}\subset e_{i,z,y}$;
 		\item if $J_{i,z,y,x}\not\in\cJ_0$, then $\cJ_0\leftarrow\cJ_0\cup\{J_{i,z,y,x}\}$ and for all $J\in\cJ_0$ set $J\prec_\vartheta J_{i,z,y,x}$; otherwise, discard the instance.
		\end{itemize} 
	\end{itemize}
If $i=\ell$, terminate $\textbf{WR}$ with output $(\cJ_0,\cE_1)$ and $\vartheta$.
Otherwise, set $J_{(i+1)}$ to be the minimal element of $\cJ_0$ with respect to $\vartheta$
such that $ J_{(i)}\prec_\vartheta J$ and $J\not\prec_\vartheta J_{(i)}$
(if this minimal element does not exist, discard the instance), and proceed with step $i+1$.
 
We observe that for any instance that is not discarded, the output satisfies
$|\cJ_0|=\ell$ and $|\cE_1|=r$, since $R\in\cM_{\ell,r}$ and in step $i$ precisely
$\sum_z z\riz$ many distinct $j$-sets are added to $\cJ_0$,
and precisely $\sum_z \riz$ many distinct $k$-sets are added to $\cE_1$.
Moreover, we note that the ordering $\vartheta$ obtained in this way is
simply the lexicographical order on $\cJ_0$ with respect to $i,z,y,x$,
i.e.\ $J_{i,z,y,x}\prec_\vartheta J_{i',z',y',x'}$ iff $\{i<i'\}$
or $\{i=i' \text{ and }z<z'\}$ or $\{i=i',\:z=z', \text{ and }y<y'\}$
or $\{i=i',\: z=z',\: y=y'\text{ and }x\le x'\}$.\footnote{Where $J_{(1)}$ is seen as $J_{0,1,1,1}$.}
This completes Phase \textbf{WR}.

It remains to embed the blue edges in a fashion which is consistent with $\vartheta$ and $B$.
Observe that it will actually be crucial for the argument, that the order $\vartheta$
is already fixed before we start embedding the blue edges. 

We next run Phase \textbf{B} as follows.  
We consider the ordered set $(\cJ_0,\prec_\vartheta)$ as the sequence $(J_{(1)},J_{(2)},\dots, J_{(\ell)})$,
and initialise $\cE_2\leftarrow\emptyset$. 
In step $i=1,\dots, \ell$, we consider the $j$-set $J_{(i)}$ to be active. For every $z=1,\dots,\binom{k}{j}$ and $y=1,\dots,\biz$
	\begin{itemize}
	\item choose an edge $e_{i,z,y}'\in V^{(k)}$ which contains $J_{(i)}$
	and another $J_{(i^*)}$ with $1\le i^*<i$;
	\item if $e_{i,z,y}'\not\in \cE_2$, then $\cE_2\leftarrow \cE_2\cup \{e_{i,z,y}'\}$;
	otherwise, discard the instance.
%	\item foo
	\end{itemize} 
%foo	
\noindent 	
If $i=\ell$, terminate $\textbf{B}$ with output $(\cJ_0,\cE_1,\cE_2)$. Otherwise, proceed with step $i+1$.

If Phase \textbf{B} terminates without being discarded, then the output of the Algorithm~\textbf{WR-B}
consists of $(\cJ_0,\cE_1,\cE_2)$, where $\cJ_0$ and $\cE_1$ are given in the output of
Phase~\textbf{WR} and $\cE_2$ is the output of Phase~\textbf{B}.

We observe that for any instance that does not get discarded we have $|\cE_2|=b$,  since $B\in \cM_{\ell,b}$ and in every step precisely $\sum_z \biz$ distinct $k$-sets are added to $\cE_2$.

Note also that in terms of marking the $j$-sets, Phase~\textbf{B} is
less restrictive than the construction of a traversable triple. More precisely, it does not actively
mark the $j$-sets, meaning that the same $j$-sets might be re-used multiple times. Furthermore
we only insist on one previously seen $j$-set $J_{(i^*)}$ being present in the edge
$e_{i,z,y}'$, where in fact there should be $z$ (previously seen but unmarked) such $j$-sets.

Now let $\cQ_{\ell,r,b}$ denote the set of all outputs of instances of \textbf{WR-B}
that did not get discarded, i.e.\ every one of our choices in both phases was valid.

\begin{Lemma}\label{lem:constr}
For all integers $\ell>r,b\ge 0$ we have $\cT_{\ell,r,b}\subset \cQ_{\ell,r,b}$; furthermore, there is a constant $C>0$ independent of $\ell,r,b$ such that
$$
|\cQ_{\ell,r,b}|\le|V|^jC^{\ell-1}\left(|V|^{k-j}\right)^{r}\left(\ell|V|^{k-j-1}\right)^{b}.
$$
\end{Lemma}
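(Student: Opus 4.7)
The plan is to prove the inclusion and the cardinality bound separately.

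For the inclusion $\cT_{\ell,r,b} \subset \cQ_{\ell,r,b}$, the strategy is to show that every edge-minimal traversable triple $T = (\cJ_0, \cE_1, \cE_2)$ of size $\ell$ with $r$ red and $b$ blue edges arises as the output of Algorithm \textbf{WR-B} on some valid input. Concretely, I will run a duty-grouped variant of the BFS exploration on $T$: starting from the minimum $j$-set $J_{(1)}$ of $\cJ_0$ under $\sigma_j$, in each step $i$ I enumerate the newly-considered red edges containing $J_{(i)}$ by duty value $z$ (and, within each duty value, by $\sigma_k$), then the blue edges revealed at $J_{(i)}$ similarly. This yields a total order $\tau$ on $\cJ_0$ and a blueprint $(R, B)$ which, by the same reasoning as in Lemma~\ref{lem:existBP}, lies in $\cM_{\ell,r} \times \cM_{\ell,b}$. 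The execution of \textbf{WR-B} on input $(R, B)$ that picks at each choice point the edge or $j$-set prescribed by the exploration reproduces $T$. The only subtle verification concerns Phase \textbf{B}: every blue edge revealed while $J_{(i)}$ is active must contain some $J_{(i^*)}$ with $i^* < i$, but this follows from edge-minimality, since a blue edge whose intersection with $\cJ_0$ is exactly $\{J_{(i)}\}$ could be removed from $\cE_2$ without destroying blue traversability (any traversal using such an edge could skip its first step).

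For the cardinality bound, I multiply the number of choices at each step of \textbf{WR-B}. The initial pick of $J_{(1)}$ contributes at most $|V^{(j)}| \le |V|^j$. In Phase \textbf{WR}, each of the $g(R) = r$ edge slots $e_{i,z,y}$ requires a $k$-set containing the prescribed $j$-set $J_{(i)}$, contributing at most $\binom{|V|-j}{k-j} \le |V|^{k-j}$ choices per slot; each of the $f(R) = \ell - 1$ subsequent $j$-set picks $J_{i,z,y,x}$ is a $j$-subset of the chosen $e_{i,z,y}$, contributing at most $\binom{k}{j}$ choices per slot. In Phase \textbf{B}, each of the $g(B) = b$ edge slots $e_{i,z,y}'$ is a $k$-set containing $J_{(i)}$ together with some earlier $J_{(i^*)}$; since $J_{(i^*)} \ne J_{(i)}$ forces $|J_{(i)} \cup J_{(i^*)}| \ge j+1$, there are at most $|V|^{k-j-1}$ such $k$-sets for each of the at most $\ell$ possible $i^*$, yielding $\ell \cdot |V|^{k-j-1}$ choices per slot. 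Taking the product and setting $C := \binom{k}{j}$ gives the claimed bound.

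The main obstacle is the first part: Phase \textbf{WR} processes edges lexicographically by $(i, z, y)$, grouping them by duty $z$, whereas the naive BFS order processes edges at step $i$ by $\sigma_k$ regardless of duty, so the algorithm's order $\vartheta$ need not coincide with $\tau_\sigma$. My fix is to use the duty-grouped exploration above, whose induced order matches $\vartheta$ by construction. The counting, by contrast, is routine; the only noteworthy feature is the factor $|V|^{k-j-1}$ (rather than $|V|^{k-j}$) per blue edge, reflecting the algorithmic requirement that each blue edge contain an already-seen $j$-set, which is ultimately what drives the correct threshold exponent in Theorem~\ref{thm:main}.
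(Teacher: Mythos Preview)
Your counting argument has a gap: you bound the number of outputs of Algorithm \textbf{WR-B} for a \emph{fixed} input $(R,B)$, but $\cQ_{\ell,r,b}$ is by definition the set of outputs ranging over \emph{all} inputs $(R,B)\in\cM_{\ell,r}\times\cM_{\ell,b}$. Different blueprints can genuinely produce different outputs --- the order $\vartheta$ in which $j$-sets are coloured, and hence which Phase~\textbf{B} edges are legal at each step, depends on the individual entries $r_{i,z}$, not merely on $g(R)=r$ and $f(R)=\ell-1$ --- so one must sum the per-blueprint bound over all admissible blueprints. By Claim~\ref{claim:nbMat} this contributes a further factor of at most $|\cM_{\ell,r}|\cdot|\cM_{\ell,b}|\le (C')^{2(\ell-1)}$, and the paper accordingly takes $C=(C')^2\binom{k}{j}$ rather than your $C=\binom{k}{j}$. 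This missing factor is precisely why Claim~\ref{claim:nbMat} is proved at all; with your bound as written the lemma is not established.

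Your treatment of the inclusion is, by contrast, more explicit than the paper's one-line ``it is straightforward'', and your observation that $\vartheta$ need not coincide with $\tau_\sigma$ is well taken. The duty-grouped exploration you propose has a small circularity as phrased (an edge's duty depends on which $j$-sets are already white, which depends on the processing order you are trying to define), but this is easily repaired: first run the standard BFS to fix each red edge's duty and the set of $j$-sets it colours, then in Phase~\textbf{WR} process the edges of each step in increasing duty while choosing to colour exactly those $j$-sets the standard BFS assigned to them. This reproduces $(\cJ_0,\cE_1)$, yields a well-defined $\vartheta$, and the blue blueprint may then be recomputed relative to $\vartheta$ before running Phase~\textbf{B}.
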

\begin{proof}
The first assertion is simple. Fix a pair $\sigma$ of total orders on $V^{(j)}$ and $V^{(k)}$.
Now consider a triple $T\in\cT_{\ell,r,b}$ and note that by
Lemma~\ref{lem:existBP} its blueprint satisfies $\pi_\sigma(T)=(R,B)$
for some $(R,B)\in \cM_{\ell,r}\times\cM_{\ell,b}$. Knowing $T$, $\sigma$,
and therefore also $\tau_\sigma(T)$, it is straightforward to provide an instance
of the above algorithm \textbf{WR}-\textbf{B} with input $(R,B)$ and output $T$
that does not get discarded.

For the upper bound on the total number of outputs let us first fix any
particular blueprint $(R,B)\in \cM_{\ell,r}\times\cM_{\ell,b}$.
For the red edges, as far as an upper bound is concerned, in each step and
for each $z$ we may choose $\riz$ elements from a set of at most $|V|^{k-j}$
with replacement and then there are at most $\binom{k}{j}^z$ ways to choose
the white $j$-sets within any red edge. (Of course these bounds are
quite crude in general, but sufficient for our result.) Since we have
at most $|V|^j$ choices for the initial white $j$-set, we have at most
\begin{align}
 |V|^j\prod_{i,z}\left(\binom{k}{j}^z |V|^{k-j}\right)^{\riz}
 &=|V|^j\binom{k}{j}^{f(R)} \left(|V|^{k-j}\right)^{g(R)}
 \stackrel{\mbox{L.~\ref{lem:existBP}}}{=}|V|^j\binom{k}{j}^{\ell-1} \left(|V|^{k-j}\right)^{r}\nonumber
\end{align}
instances which can be distinguished by their combined white and red structure.

Now consider the number of ways of choosing a blue edge together with the previously seen $j$-set $J_{(i^*)}$.
A blue edge contains at least $j+1$ already embedded vertices, for instance those in $J_{(i)}\cup J_{(i^*)}$, and there are at most $\ell$ choices for $J_{(i^*)}$. Thus we have at most $\ell |V|^{k-j-1}$ choices for this edge. Therefore the number of choices for the blue edges is at most 
\begin{align}
   \prod_{i,z}(\ell|V|^{k-j-1})^{\biz} &=(\ell|V|^{k-j-1})^{g(B)}=(\ell|V|^{k-j-1})^{b} \nonumber.
\end{align}

Finally, we have already counted the number of matrices $R\in\cM_{\ell,r}$ and $B\in\cM_{\ell,b}$ in Claim~\ref{claim:nbMat}: there is a constant $C'>0$ such that we have
   $$
   |\cM_{\ell,r}|\le (C')^{\ell-1} \qquad \text{ and } \qquad |\cM_{\ell,b}|\le (C')^{\ell-1}.
   $$
   Combining this with the previous calculations provides the desired upper bound with
   $C=(C')^2\binom{k}{j}$. 
\end{proof}

\begin{Remark}\label{rem:constr}
While the definition of (edge-minimal) traversable triples is symmetric in the
two colours red and blue, the bound in Lemma~\ref{lem:constr} is not. However,
by exchanging the roles of the two colours we also obtain 
$$
|\cQ_{\ell,r,b}|\le|V|^jC^{\ell-1}\left(|V|^{k-j}\right)^{b}\left(\ell|V|^{k-j-1}\right)^{r}.
$$
\end{Remark}

\subsection{Proof of Theorem~\ref{thm:main}~(i)}
Suppose
$$
p_1 p_2=\frac{1}{cn^{2k-2j-1}\log n}
$$ where $c>0$ is a sufficiently large constant. As mentioned earlier, it is a necessary condition for $j$-percolation of $H\sim\dhknp$ that both its red and blue $k$-graphs are $j$-connected. Therefore, by~\eqref{eq:connthresh}, we may without loss of generality assume
$$
\frac{j(k-j)!\log n}{2n^{k-j}}\le p_1,p_2\le \frac{2}{j(k-j)! c n^{k-j-1}(\log n)^2},
$$
where the upper bound follows directly from the lower bound and the assumption on the product $p:=p_1p_2$. We will prove that $\dhknp$ does not $j$-percolate by showing that there is a bottleneck in the process.

By Claim~\ref{claim:bottleneck} (for $N=\log n$), Fact~\ref{fact:trav}, and Claim~\ref{claim:maxEdges} we obtain the upper bound
 $$
 {\mathbb P}\left(H \text{ $j$-percolates}\right)\le \sum_{\ell,r,b}\:\sum_{T\in \cT_{\ell,r,b}}{\mathbb P}\left(T\subset H\right),
 $$
where (a priori) the first sum ranges over all integers $\ell$, $r$ and $b$ satisfying $\log n\le \ell \le \binom{k}{j}\log n$ and $0\le r,b\le \ell-1$. Furthermore, Lemma~\ref{lem:existBP} and Claim~\ref{claim:nbMat} imply that $\binom{k}{j}^{-1}(\ell-1)\le r,b$ whenever $\cT_{\ell,r,b}\neq\emptyset$.

 It will be convenient to split the summation into two parts, depending on whether the edge-minimal traversable triple $T=(\cJ_0,\cE_1,\cE_2)$ contains more red or blue edges.
 So let us first consider the term
 $$
 q_1:=\sum_{r\le b<\ell}\:\sum_{T\in \cT_{\ell,r,b}}{\mathbb P}\left(T\subset H\right).
 $$

Since any triple $T\in\cT_{\ell,r,b}$ contains precisely $r$ red edges and $b$ blue edges, the probability that it is contained in $H$ is
$p_1^{r}p_2^{b}$
and in particular this probability depends only on the parameters $r$ and $b$. Furthermore note that by Lemma~\ref{lem:constr} we have
$$
|\cT_{\ell,r,b}|\le n^jC^{\ell-1}\left(n^{k-j}\right)^{r}\left(\ell n^{k-j-1}\right)^{b}
$$
for some positive constant $C>0$ and thus
\begin{align}\nonumber
\sum_{T\in \cT_{\ell,r,b}}{\mathbb P}\left(T\subset H\right)&\le n^jC^{\ell-1}\left(n^{k-j}p_1\right)^{r}\left(\ell n^{k-j-1}p_2\right)^{b}\\
&\le n^j C^{\ell-1}\left(\ell n^{2k-2j-1}p\right)^{r},\label{eq:subupper}
\end{align}
since $r\le b$ and $\ell n^{k-j-1}p_2=o(1)$. Now recall that $\ell n^{2k-2j-1}p \le \binom{k}{j}/c<1$ for $c$ sufficiently large, and thus the bound in~\eqref{eq:subupper} is maximal when $r$ is minimal, i.e.\ $r= \binom{k}{j}^{-1}(\ell-1)$.
This provides the upper bound
\begin{align*}
q_1& \le \hspace{-.1cm}\sum_{r\le b<\ell} n^j\left(\frac{1}{c}\binom{k}{j}C^{\binom{k}{j}}\right)^{\frac{\ell-1}{\binom{k}{j}}} \le  \left(\binom{k}{j}\log n\right)^3n^j\left(\frac{1}{c}\binom{k}{j}C^{\binom{k}{j}}\right)^{(\log n-1)/\binom{k}{j}}\hspace{-.2cm}=o(1)
\end{align*}
for any sufficiently large constant $c$ dependent on $k$ and $j$
(since $C$ is also only dependent on $k$ and $j$).

To complete the proof consider
$$
 q_2:=\sum_{b\le r<\ell}\:\sum_{T\in \cT_{\ell,r,b}}{\mathbb P}\left(T\subset H\right).
 $$
Swapping the roles of red and blue (as indicated in Remark~\ref{rem:constr}) we can use precisely the same argument to show that $q_2=o(1)$ (using $p_1':=p_2$, $p_2':=p_1$, $\ell':=\ell$, $r':=b$ and $b':=r$). Thus
$$
{\mathbb P}\left(H \text{ $j$-percolates}\right)\le q_1+q_2=o(1),
$$
i.e.\ the binomial random double $k$-graph $\dhknp$  whp does not $j$-percolate.

\section{Supercritical regime: proof of Theorem~\ref{thm:main}~\rm{(ii)}}\label{sec:supercrit}

As we shall see, the supercritical regime is easier to prove, since we can reduce to the
graph case $k=2$ and $j=1$. In the first step we provide the reduction for
any $k\ge 3$ but $j=1$.  We then show how to obtain the result for arbitrary
pairs $(k,j)$ from the result for $(k-1,j-1)$ provided $j\ge 2$.

Given integers $1\le j \le k-1$, we define a statement $S(k,j)$ as follows.
\begin{displayquote}
$S(k,j)$: There exists a constant $c=c(k,j)>0$ and an integer $n_0=n_0(k,j)$
such that for any functions
$p_1=p_1(n)$ and $p_2=p_2(n)$ which satisfy $\frac{c\log n}{n^{k-j}} \le p_1,p_2 \le 1$
and $p_1p_2\ge \frac{c}{n^{2k-2j-1}\log n}$ for $n\ge n_0$,
the double $k$-graph $H\sim \dhknp$ $j$-percolates whp.
\end{displayquote}

Showing that $S(k,j)$ holds for all pairs of integers $1\le j<k$ proves Theorem~\ref{thm:main}~(ii).

We proceed inductively, with the base case being the result on graphs, proved in Theorem~\ref{thm:graphs}~(i).

\begin{Remark}\label{prop:graph} $S(2,1)$ holds.
\end{Remark}

We split the induction step into two parts.

\begin{Claim}\label{claim:a}
If $k\ge 3$, then $S(2,1)$ implies $S(k,1)$.
\end{Claim}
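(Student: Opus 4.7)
The plan is to reduce, via the $2$-shadow, to the graph case provided by $S(2,1)$. The driving observation is that for $j = 1$ the $1$-jigsaw percolation process on a double $k$-graph is insensitive to the $k$-graph structure beyond the $2$-shadows of its red and blue $k$-graphs. Indeed, for two clusters $C, C'$ in the process, some red $k$-edge joins them in the auxiliary graph iff some pair $\{u, v\}$ with $u \in C$, $v \in C'$ is contained in a common red $k$-edge, which is exactly the statement that $\{u,v\}$ lies in the $2$-shadow of the red $k$-graph; the analogous statement holds for blue. Hence the $1$-jigsaw process on $H \sim \dhknp$ is literally identical to the (graph) $(2,2)$-jigsaw process on its pair of $2$-shadow graphs.

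The tricky point is that these $2$-shadows are not themselves binomial random graphs (their edges are positively correlated), so $S(2,1)$ cannot be applied to them directly. My plan is to extract from each $2$-shadow an explicit subgraph which \emph{is} a binomial random graph. For this I will first establish the following combinatorial fact: there exists a map $f\colon \binom{[n]}{k} \to \binom{[n]}{2}$ with $f(e) \subset e$ for every $e$ and $|f^{-1}(P)| \ge \alpha n^{k-2}$ for every pair $P$, where $\alpha = \alpha(k) > 0$ is a constant. I would prove this by a standard probabilistic argument: choosing $f(e)$ uniformly at random among the $\binom{k}{2}$ pairs contained in $e$, independently over $e$, each $|f^{-1}(P)|$ is a sum of $\binom{n-2}{k-2}$ independent $\mathrm{Bernoulli}(1/\binom{k}{2})$ variables, so Chernoff plus a union bound over the $\binom{n}{2}$ pairs gives the uniform lower bound whp, hence a deterministic witness $f$ exists. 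For each colour $i \in \{1,2\}$ I then let $\tilde G_i$ be the graph on $[n]$ whose edges are those pairs $P$ for which at least one $k$-set in $f^{-1}(P)$ is an $i$-coloured edge of $H$. Disjointness of the families $\{f^{-1}(P)\}_P$ makes the edges of $\tilde G_i$ mutually independent, and $\tilde G_i$ stochastically dominates $G(n, q_i)$ with $q_i := 1 - (1 - p_i)^{\alpha n^{k-2}}$.

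It remains to verify that $q_1, q_2$ satisfy the hypotheses of $S(2,1)$. Splitting according to whether $p_i n^{k-2} \le 1$ (giving $q_i \ge \alpha p_i n^{k-2}/2$) or $p_i n^{k-2} > 1$ (giving $q_i \ge 1 - e^{-\alpha}$), a routine computation converts the assumed bounds $\min\{p_1,p_2\} \ge c \log n / n^{k-1}$ and $p_1 p_2 \ge c/(n^{2k-3} \log n)$ into $\min\{q_1, q_2\} \ge c_0 \log n / n$ and $q_1 q_2 \ge c_0/(n \log n)$, where $c_0$ is the constant from $S(2,1)$, as soon as $c = c(k)$ is chosen large enough. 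Applying $S(2,1)$ to $(\tilde G_1, \tilde G_2)$ then yields whp percolation of this graph pair; combined with the inclusion of $\tilde G_i$ in the $2$-shadow of the $i$-coloured $k$-graph of $H$, monotonicity of jigsaw percolation in the edges of each colour, and the opening equivalence, this gives $1$-percolation of $H$ whp.

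The only step that needs real care is the construction of the map $f$, but even this is routine by the probabilistic method. The substantive content of the claim lies entirely in the opening observation: for $j = 1$, the $k$-graph jigsaw process is really graph-theoretic in nature, so the whole problem collapses onto the already-proven graph case.
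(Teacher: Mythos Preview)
Your proof is correct and takes a genuinely different route from the paper's. Both arguments exploit the observation that $1$-jigsaw percolation on a double $k$-graph depends only on the pair of $2$-shadows, and both handle the correlation in the shadows by extracting disjoint witnessing families of $k$-sets for each pair. The paper does this by splitting $[n]$ into two halves $Q$ and $Q^*$ and keeping only those $k$-edges with exactly two vertices in $Q$: such an edge witnesses a unique pair in $Q$, so the resulting double graph on $Q$ is genuinely binomial with parameters $p_i' = 1-(1-p_i)^{\binom{|Q^*|}{k-2}}$, and $S(2,1)$ applies directly. The price is that one only percolates on each half separately and must then argue (using that any witnessing $k$-edge meets both halves, since $k\ge 3$) that the two resulting clusters coincide. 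Your construction of the map $f$ via the probabilistic method is a neat alternative: it yields a dominated binomial double graph on the whole of $[n]$ at once, so no merging step is needed. The paper's version is slightly more explicit (no existence argument for $f$) and gives exact binomial graphs rather than stochastic domination, but your argument is arguably cleaner overall. One small point of phrasing: you write ``applying $S(2,1)$ to $(\tilde G_1,\tilde G_2)$'', but of course $S(2,1)$ is stated for $H_2(n,q_1,q_2)$; it would be cleaner to say you apply $S(2,1)$ to the coupled copy of $H_2(n,q_1,q_2)$ sitting inside $(\tilde G_1,\tilde G_2)$ and then invoke monotonicity, which is clearly what you intend.
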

\begin{proof} Let $k\ge 3$ be an integer. Assume $S(2,1)$ holds and let $c'=c(2,1)$. Set $c=c(k,1):=3(2k)^{2k}c'$. By monotonicity, in order to show $S(k,1)$ it suffices to prove that $H\sim\dhknp$ $1$-percolates for probabilities $p_1=p_1(n)$ and $p_2=p_2(n)$ satisfying
$$
p=p_1p_2=\frac{c}{n^{2k-3}\log n}
$$
and
$$
\frac{c\log n}{n^{k-1}}\le p_1,p_2\le \frac{1}{n^{k-2}(\log n)^2},
$$
where the upper bound is an immediate consequence of the lower bound and the first condition.

We use the following construction to reduce to the graph case. Split the vertex set
$[n]$ into two disjoint sets, say $Q:=[\lfloor n/2\rfloor ]$ and $Q^*:=[n]\setminus Q$. Let
$H'$ be the double graph on $Q$ whose edges are any pair in $Q$ contained in an edge of $H$
whose remaining $k-2$ vertices are all in $Q^*$. Note that if $H'$ $1$-percolates,
then all vertices of $Q$ must lie in a single cluster $\cC$ of the final partition
$\cC_\infty$ of the $1$-jigsaw percolation process in $H$. Notice that
$H' \sim H_2(n',p_1',p_2')$ with $n':=\lfloor n/2\rfloor $ and the edge probabilities
$p_1=p_1(n')$ and $p_2=p_2(n')$ satisfy
\begin{align}\label{eq:superupper}
p_i':=1-\left(1-p_i\right)^{\binom{n/2}{k-2}}&\ge 1-\exp\left(-p_i (n/(2k-4))^{k-2}\right)\ge \frac{p_i n^{k-2}}{(2k)^k}
\end{align}
for $i\in\{1,2\}$ and sufficiently large $n$, since $p_i=o( n^{-(k-2)})$. In particular, we have
$$
p_1'p_2' \ge \frac{p_1p_2 n^{2k-4}}{(2k)^{2k}} \ge\frac{c'}{n'\log n'}
$$
and also
$$
p_i'\ge \frac{p_i n^{k-2}}{(2k)^k}\ge \frac{c\log n}{(2k)^k n}\ge \frac{c'\log n'}{n'},
$$
for $i\in\{1,2\}$. Thus, by the choice of $c'$, $H'$ does indeed $1$-percolate whp.
Similarly, reversing the roles of $Q$ and $Q^*$, i.e.\ considering only edges with
precisely $2$ vertices from $Q^*$, we deduce that whp there is a cluster
$\cC^*\in\cC_\infty$ such that $Q^*\subset \cC^*$. Even though these two events may
not be independent (at least for $k=4$) applying a union bound we can guarantee
that they happen simultaneously whp. Furthermore, there are at least two edges,
one red and one blue, present in $H'$. By definition, such edges can only exist if
there are corresponding edges (which
may not be uniquely determined) of $H$ which each contain
at least one vertex from $Q$ and $Q^*$ (since $k\ge3$). Thus the clusters
$\cC$ and $\cC^*$ must coincide and contain all vertices, i.e.\ $\cC=\cC^*=[n]$.
In other words, $H$ $1$-percolates whp.
\end{proof}

\begin{Claim}\label{claim:b}
If $k>j\ge 2$, then $S(k-1,j-1)$ implies $S(k,j)$.
\end{Claim}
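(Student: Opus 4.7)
Fix a vertex $v\in[n]$ and consider its \emph{link} $H_v$, the double $(k-1)$-graph on vertex set $[n]\setminus\{v\}$ whose red (respectively blue) edges are the $(k-1)$-sets $e'$ such that $e'\cup\{v\}$ is a red (respectively blue) edge of $H$. By independence of the edges of $H$, we have $H_v\sim H_{k-1}(n-1,p_1,p_2)$. Since $(k-1)-(j-1)=k-j$ and $2(k-1)-2(j-1)-1=2k-2j-1$, the hypotheses on $p_1,p_2$ in $S(k,j)$ become, up to an asymptotically negligible change from $n$ to $n-1$, exactly the hypotheses of $S(k-1,j-1)$; so by choosing $c=c(k,j)$ sufficiently large in terms of $c(k-1,j-1)$, the inductive hypothesis $S(k-1,j-1)$ yields that $H_v$ $(j-1)$-percolates whp.

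The key structural step is to transfer $(j-1)$-percolation of $H_v$ into information about the $j$-jigsaw process on $H$. There is a natural bijection $J_0\leftrightarrow J_0\cup\{v\}$ between $(j-1)$-sets of $[n]\setminus\{v\}$ and $j$-sets of $[n]$ that contain $v$. If two $(j-1)$-sets $J_0,J_0'$ are merged in some step of the $(j-1)$-process on $H_v$ via edges $f_1\in E_1(H_v)$ and $f_2\in E_2(H_v)$, then the corresponding $j$-sets $J_0\cup\{v\}$ and $J_0'\cup\{v\}$ are merged in the corresponding step of the $j$-process on $H$ by the edges $f_1\cup\{v\}$ and $f_2\cup\{v\}$ of $H$. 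A straightforward induction on the time step then shows that the $j$-jigsaw partition of $H$, restricted to $j$-sets containing $v$, is always at least as coarse as the image of the $(j-1)$-jigsaw partition of $H_v$; in particular, if $H_v$ $(j-1)$-percolates, then all $j$-sets of $[n]$ containing $v$ lie in a common cluster $\widetilde C_v$ of the final partition of the $j$-process on $H$.

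Finally, applying the previous steps simultaneously to every $v\in[n]$, whp every $\widetilde C_v$ is well-defined. Here we crucially use that $j\ge 2$: for any two vertices $v,w\in[n]$ there is a $j$-set of the form $\{v,w\}\cup X$ (for any $(j-2)$-set $X\subseteq[n]\setminus\{v,w\}$) that lies in both $\widetilde C_v$ and $\widetilde C_w$, and this forces $\widetilde C_v=\widetilde C_w$. As every $j$-set contains at least one vertex, all $j$-sets lie in this common cluster and $H$ $j$-percolates whp, as required.

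The main obstacle is rigorously justifying the union bound in the last step: taking it over all $n$ choices of $v$ requires the failure probability in $S(k-1,j-1)$ to be $o(1/n)$ rather than merely $o(1)$. I expect this to be handled either by strengthening the inductive statement to a polynomial-rate version (which is readily available for the base case $S(2,1)$ from Theorem~\ref{thm:graphs} and for $S(k,1)$ from Claim~\ref{claim:a} via standard concentration bounds, and then propagates through the induction), or by only requiring that the links of a subset $V^*\subseteq[n]$ of size at least $n-j+1$ percolate (which still suffices to cover every $j$-set by the $j\ge 2$ argument above), combined with a concentration argument bounding the number of ``bad'' vertices.
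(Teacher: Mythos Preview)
Your structural core matches the paper's: pass to link double $(k-1)$-graphs $H_v$, use $S(k-1,j-1)$ to merge all $j$-sets through $v$ into a single cluster $\widetilde C_v$, and then exploit $j\ge 2$ to glue the clusters $\widetilde C_v$ together via $j$-sets containing two prescribed vertices. The gap you flag at the end is real, and neither of your proposed patches is what the paper actually does.

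The paper's fix is a \emph{two-round exposure}. Writing $p_i = 2p_i' - (p_i')^2$, one realises $H$ as the union of two independent copies $H_\alpha, H_\beta \sim H_k(n,p_1',p_2')$ with $p_i' \ge p_i/2$. In the first round one runs your link argument inside $H_\alpha$; the crucial point is that one does \emph{not} need every link to percolate. Calling $v$ \emph{good} if $H_{\alpha,v}$ $(j-1)$-percolates, the inductive hypothesis only gives that the expected number of exceptional vertices is $o(n)$, and plain Markov then yields whp at most $n/\sqrt{\omega}$ exceptional vertices for some $\omega\to\infty$. Your gluing argument then shows that the final partition of $H_\alpha$ already has one cluster $\cC$ containing every $j$-set with at least one good vertex.

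The remaining $j$-sets---those consisting entirely of exceptional vertices---are handled in the second round using the fresh randomness of $H_\beta$. For each such $J$ one only needs a red edge and a blue edge of $H_\beta$ through $J$ that also hit some good vertex; since there are $(1-o(1))n$ good vertices and $p_i' \ge \tfrac{c\log n}{2n^{k-j}}$, the failure probability for a fixed $J$ and colour is at most $(1-p_i')^{(1-o(1))n\binom{n-j-1}{k-j-1}} = o(n^{-j})$ for $c$ large enough, and a union bound over all $j$-sets finishes in a single step.

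On your two suggestions: strengthening the inductive statement to a polynomial failure rate would require extracting quantitative bounds from the proof of Theorem~\ref{thm:graphs} in~\cite{BollobasRiordanSlivkenSmith17} and threading them through Claim~\ref{claim:a}, which is additional work the paper avoids entirely. Your second idea---needing only $n-j+1$ good vertices so that every $j$-set meets one by pigeonhole---is a pleasant observation, but driving the number of bad vertices down from an $o(n)$ expectation to at most $j-1$ would require a concentration inequality for the highly dependent indicators ``$H_v$ fails to $(j-1)$-percolate'', and none is readily available. The two-round trick sidesteps both difficulties.
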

\begin{proof}
Fix integers $k>j\ge 2$. Assume $S(k-1,j-1)$ holds and let $c'=c(k-1,j-1)$. Set $c:=\max\{5c', (2k)^{k}\}$.
Again, by monotonicity, in order to show $S(k,j)$ it suffices to prove that $H\sim\dhknp$ $j$-percolates  for probabilities $p_1=p_1(n)$ and $p_2=p_2(n)$ satisfying
$$
p=p_1p_2=\frac{c}{n^{2k-2j-1}\log n}\qquad \text{and}\qquad p_1,p_2\ge \frac{c\log n}{n^{k-j}}.
$$

We expose the edges of $H$ in two rounds, i.e.\ for $i=1,2$ we set
$p_i':=1-\sqrt{1-p_i}\ge p_i/2$. Note that
$p_i=2p_i'-\left(p_i'\right)^2$ and thus we obtain $H$ as the union
of two independent copies $H_{\alpha}$ and $H_{\beta}$ of
$H_k(n,p_1',p_2')$. (In particular we obtain the final
partition of $j$-jigsaw percolation on $H$ by running $j$-jigsaw percolation on
$H_{\alpha}$ and using its final partition, denoted by $\cC_{\infty}[H_{\alpha}]$, as the
initial partition for $j$-jigsaw percolation on $H_{\beta}$.)

In the following we will consider certain \emph{link double $(k-1)$-graphs}
associated to $H_{\alpha}$. They are constructed as follows.
Given a vertex $v\in[n]$ we first delete from $H_{\alpha}$ all edges (i.e.\ $k$-sets) that do not
contain the vertex $v$. Then we delete $v$ from the vertex set and replace
every remaining edge $e$ with the $(k-1)$-set $e\setminus v$.
We denote by $H_{\alpha,v}$ the resulting random double $(k-1)$-graph
on vertex set $[n]\setminus\{v\}$, and call it the {\em link double $(k-1)$-graph of $v$}. 

Now note that there is a natural bijection mapping the set of $(j-1)$-sets
(respectively $(k-1)$-sets) in $H_{\alpha,v}$ to the set of $j$-sets (respectively $k$-sets)
containing $v$ in $H_{\alpha}$. Therefore any cluster in the final partition of
$(j-1)$-jigsaw percolation on $H_{\alpha,v}$ corresponds to a collection of $j$-sets
(once we have added $v$ to each) which must be contained in a cluster of
$\cC_{\infty}[H_{\alpha}]$. Therefore, whenever $H_{\alpha,v}$ $(j-1)$-percolates,
there is a unique cluster in $\cC_{\infty}[H_{\alpha}]$ which contains all $j$-sets
which include $v$, and thus we call it {\em the corresponding cluster to $v$}.
We call a vertex $v$ \emph{good} if $H_{\alpha,v}$ $(j-1)$-percolates;
vertices that are not good are called \emph{exceptional}.
This notion is motivated by the following observation. The corresponding clusters of any two 
good vertices $u$ and $u'$ overlap in all $j$-sets containing both $u$ and $u'$, thus indeed they must coincide
(since $j\ge 2$). In other words, the final partition $\cC_{\infty}[H_{\alpha}]$
contains a cluster $\cC$ which includes every $j$-set with at least one good vertex.

Hence it remains to study $j$-sets containing only exceptional vertices. For this we observe that $H_{\alpha,v}$ is distributed as
$H_{k-1}(n',p_1',p_2')$, where $n':=n-1$ and the
probabilities $p_1'=p_1'(n')$ and
$p_2'=p_2'(n')$ satisfy
$$
p_1' p_2' \ge \frac{c}{4n^{2k-2j-1}\log n}\ge \frac{c'}{(n')^{2(k-1)-2(j-1)-1}\log (n')},
$$
$$
p_i'\ge \frac{c\log n}{2n^{k-j}}\ge \frac{c'\log (n')}{(n')^{(k-1)-(j-1)}}
$$
for $i\in\{1,2\}$ and sufficiently large $n$. Consequently, by the choice of $c'$, $H_{\alpha,v}$ $(j-1)$-percolates whp and therefore the expected number of exceptional vertices is $o(n)$, say $n/\omega$, for some function $\omega\to\infty$. Thus whp there are at most $n/\sqrt{\omega}$ exceptional vertices, by Markov's inequality.

Now we run $j$-jigsaw percolation on $H_{\beta}$ with initial partition
$\cC_{0}[H_{\beta}]:=\cC_{\infty}[H_{\alpha}]$ and show that whp $\cC_{1}[H_{\beta}]=\cC_{\infty}[H_{\beta}]=[n]$,
i.e.\ there is percolation in a single step. For this it is sufficient to
show that whp for every $j$-set $J=\{u_{(1)},\dots,u_{(j)}\}$ containing \emph{only}
exceptional vertices there are edges $e_1$ (red) and $e_2$ (blue) in
$H_{\beta}$ each containing $J$ and a good vertex $v_1$ and $v_2$ respectively,
since $J':=\{u_{(1)},\dots,u_{(j-1)},v_i\}$ satisfies $J'\in \cC_{\infty}[H_{\alpha}]$ and $J'\subset e_i$. For any
$i\in\{1,2\}$ and any such $j$-set $J$, the probability that no edge $e_i$ exists is
at most
\begin{align*}
(1-p_i')^{(1-1/\sqrt{\omega})n\binom{n-j-1}{k-j-1}} & \le \exp \left(-p_i' n^{k-j}/(2k)^{k-j}\right)\\
& \le \exp \left(-(2k)^{j-k}c\log n\right) \le n^{-(2k)^j} = o\left(n^{-j}\right),
\end{align*}
for sufficiently large $n$ and by the choice of $c$. Hence we may take the union bound over $i\in\{1,2\}$ and all (such) $j$-sets. Therefore $\dhknp$ $j$-percolates whp, completing the proof.
\end{proof}
Proposition~\ref{prop:graph} and Claims~\ref{claim:a}~\&~\ref{claim:b} imply that $S(k,j)$ holds for all pairs of integers $1\le j<k$ and this proves Theorem~\ref{thm:main}~(ii).

\section{Related models}\label{sec:related}
With some small alterations these methods can also be applied for other models, for instance line graphs or in a setting with any fixed number of colours.
\subsection{Line graphs}
We consider the following random graph $L(n,p_1,p_2)$ that has a vertex for every pair of elements from the set $[n]$, i.e.\ $V=[n]^{(2)}$, and any two vertices that intersect form a red/blue edge with probabilities $p_1$ and $p_2$, respectively, independently of each other and of all pairs of vertices; disjoint vertices cannot form an edge. Note that (graph-)jigsaw percolation on this model is closely related to $2$-jigsaw percolation on $H_3(n,p_1,p_2)$. Following the lines of our proof for $k=3$ and $j=2$ we obtain the following result.
\begin{Theorem}\label{thm:line}
Let $0\le p_1=p_1(n),p_2=p_2(n)\le 1$ and let $H\sim L(n,p_1,p_2)$. Then there is a constant $c>0$ such that
\begin{enumerate}
\item  if $p_1p_2\le \frac{1}{cn\log n}$, then whp $H$ does not $j$-percolate;
\item  if $p_1p_2\ge \frac{c}{n\log n}$ and $\min\{p_1,p_2\}\ge \frac{c\log n}{n}$, then whp $H$ $j$-percolates.
\end{enumerate}
\end{Theorem}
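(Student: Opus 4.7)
The plan is to imitate the analysis of Theorem~\ref{thm:main} for $(k,j)=(3,2)$, exploiting the fact that $L(n,p_1,p_2)$ is itself a double graph on the vertex set $V=[n]^{(2)}$ in which edges only appear between intersecting pairs.

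For part~(i), I would replicate Section~\ref{sec:subcrit} verbatim with $L(n,p_1,p_2)$ viewed as a double graph on $\binom{n}{2}$ vertices. The bottleneck statement (Claim~\ref{claim:bottleneck}), the definition of edge-minimal traversable triples, and the BFS/blueprint machinery all carry across; in particular edge-minimality forces each traversable pair $(\cJ,\cE_i)$ to be a tree, so $|\cE_i|=|\cJ|-1=\ell-1$. The essential modification is in algorithm \textbf{WR-B}: whenever a new edge is revealed from an active vertex $u=\{a,b\}$, its other endpoint must be a pair intersecting $u$, so there are at most $2(n-2)$ choices; in Phase~\textbf{B} the previously seen endpoint $J_{(i^*)}$ (with $i^*<i$) determines the blue edge uniquely, giving at most $\ell$ choices. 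This should yield
\[
|\cT_{\ell,\ell-1,\ell-1}| \le n^2\, C^{\ell-1}\, (2n)^{\ell-1}\, \ell^{\,\ell-1}
\]
for a constant $C>0$. A first moment calculation then bounds the expected number of bottleneck triples by $n^2(2Cn\ell p)^{\ell-1}$; since $\ell\le 2\log n$ and $p \le 1/(cn\log n)$ force $2Cn\ell p\le 4C/c$, choosing $c$ sufficiently large makes this $o(1)$, exactly as in the proof of Theorem~\ref{thm:main}~(i).

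For part~(ii), I would reduce directly to $S(2,1)$, in the spirit of Claim~\ref{claim:b}. For each $a\in[n]$ define the \emph{link double graph} $L_a$ on vertex set $[n]\setminus\{a\}$ by declaring $\{b,c\}$ an edge of a given colour precisely when the $L$-edge $\{\{a,b\},\{a,c\}\}$ is present in $L(n,p_1,p_2)$. Since every $L$-edge has a unique intersection element, the family $\{L_a\}_{a\in[n]}$ consists of independent copies of $H_2(n-1,p_1,p_2)$. Writing $p_i=1-(1-p_i')(1-p_i'')$ with $p_i'=p_i''=1-\sqrt{1-p_i}$, expose edges in two rounds. By $S(2,1)$ and Markov's inequality, after round one whp all but $o(n)$ vertices $a$ are \emph{good} (meaning $L_a$ percolates); the clusters corresponding to any two good vertices $a,a'$ merge through their shared pair $\{a,a'\}$. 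In round two, for every pair $\{a,b\}$ of exceptional vertices we need an edge of each colour to some $\{a,c\}$ or $\{b,c\}$ with $c$ good; there are at least $2(n-o(n))$ such candidate edges and each is present with probability at least $p_i/2\ge c\log n/(2n)$, so a union bound over the $O(n^2)$ relevant pairs and the two colours gives failure probability $O(n^{2-c(1-o(1))})=o(1)$ once $c$ is large.

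The main obstacle, as for Theorem~\ref{thm:main}, is the subcritical counting. The key observation is that the intersecting-pair constraint in $L(n,p_1,p_2)$ reduces the number of candidate endpoints at each BFS step from $\binom{n}{2}-1$ (as for a generic graph on $\binom{n}{2}$ vertices) down to at most $2(n-2)$; this is precisely what shifts the threshold from the generic $p\asymp 1/(n^2\log n)$ up to $p\asymp 1/(n\log n)$.
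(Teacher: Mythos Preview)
Your proposal is correct and follows essentially the same approach as the paper's (sketched) proof: the subcritical argument is the blueprint/\textbf{WR-B} machinery of Section~\ref{sec:subcrit} specialised to the line-graph setting, and the supercritical argument is precisely the link-graph reduction to $S(2,1)$ that the paper indicates. Your observations that edge-minimal traversable pairs are spanning trees (so $r=b=\ell-1$, which is what the paper means by ``no edges doing multiple duty'') and that the number of neighbours of an active pair is $O(n)$ rather than $\binom{n}{2}$ are exactly the two simplifications the paper alludes to; your additional remark that the family $\{L_a\}$ is mutually independent is correct but not needed, since you only invoke Markov's inequality.
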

In other words, the threshold $\hat{p}_L=\hat{p}_L(n)$ for jigsaw percolation on $L(n,p_1,p_2)$ in terms of the product $p=p_1 p_2$ is of the order
$$
\hat{p}_L=\Theta\left(\frac{1}{n\log n}\right).
$$
Indeed the proof for the subcritical regime will be simplified since there cannot
be any edges doing multiple duty (neither in red nor in blue).
For the supercritical regime one reduction step is enough, since the link graph
of a vertex in the line graph is a binomial random double graph and thus
Theorem~\ref{thm:graphs} applies.

\subsection{More colours}
Returning to the original motivation of jigsaw percolation, one might ask whether
the social network is able to collectively solve multiple puzzles simultaneously.
In this spirit we define a binomial random $s$-fold $k$-graph $H_k(n,p_1,\dots,p_s)$,
in which the vertex set is $[n]$ and any $k$-set forms an $i$-edge with probability
$p_i$ independently for all $1\le i\le s$ and all other $k$-sets.

Now in the process of $j$-jigsaw percolation, clusters merge if for each colour
there is at least one edge connecting them. Hence, based on the same intuition,
we analyse internally spanned $(s+1)$-tuples (the s-coloured analogue of internally
spanned triples) and observe that the sequence of white $j$-sets is already determined
by the set of edges of the first colour, say red. Now any further colour essentially
behaves like blue and in particular independently of the other colours, given the
sequence of white $j$-sets. With this intuition we obtain the following generalisation
of Theorem~\ref{thm:main}.
\begin{Theorem}\label{thm:multi}
For integers $1\le j < k$ and $s\ge 2$ let $0\le p_1=p_1(n)\le \dots \le p_s=p_s(n)\le 1$ and let $H\sim H_k(n,p_1,\dots,p_s)$. Then there is a constant $c=c(s,k,j)>0$ such that
\begin{enumerate}
\item if $p_1\le \frac{\log n}{cn^{k-j}}$ or for any $2\le r \le s$ we have $\prod_{i=1}^{r}p_i\le \frac{1}{cn^{r(k-j-1)+1}(\log n)^{r-1}}$, then whp $H$ does not $j$-percolate;
\item if $p_1 \ge \frac{c\log n}{n^{k-j}}$ and for each $2\le r \le s$ we have $\prod_{i=1}^{r}p_i\ge \frac{c}{n^{r(k-j-1)+1}(\log n)^{r-1}}$,
then whp $H$ $j$-percolates.  
\end{enumerate}
\end{Theorem}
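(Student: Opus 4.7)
The plan is to adapt both directions of Theorem~\ref{thm:main} by letting one colour play the ``red'' discovery role and each of the remaining colours independently play a ``blue'' role, as foreshadowed in the paragraph preceding the statement.

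For the subcritical direction, I would first dispose of the case $p_1 \le \log n/(cn^{k-j})$: then colour $1$ fails to be $j$-connected whp by~\eqref{eq:connthresh}, so $H$ trivially does not $j$-percolate. Otherwise, let $r^* \in \{2, \ldots, s\}$ be the \emph{smallest} value for which the product condition holds, and restrict attention to the $r^*$-colour sub-process on colours $1, \ldots, r^*$: since every merge of the full $s$-colour process requires an edge of every colour, non-percolation of the sub-process implies non-percolation of $H$. I would then rerun Section~\ref{sec:subcrit} for $(r^*+1)$-tuples $(\cJ_0, \cE_1, \ldots, \cE_{r^*})$. Each extra colour contributes an independent blueprint matrix, and the analogue of Lemma~\ref{lem:constr} --- for any choice of ``red'' index $*$ --- takes the form
\[
|\cT_{\ell, m_1, \ldots, m_{r^*}}| \le |V|^j C^{\ell-1}\bigl(|V|^{k-j}\bigr)^{m_*} \prod_{i \ne *}\bigl(\ell|V|^{k-j-1}\bigr)^{m_i}.
\]
Picking $*$ to minimise $m_i$ and pulling out $m_*$ copies of each factor, the product condition then produces a leading factor $\bigl(\binom{k}{j}^{r^*-1}/c\bigr)^{m_*}$ which is $o(1)$ for $c$ large enough, just as in the proof of Theorem~\ref{thm:main}(\ref{thm:main:subcrit}).

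The main obstacle, and the reason for insisting on the \emph{minimality} of $r^*$, is controlling the residual factor $\prod_{i \ne *}(\ell n^{k-j-1} p_i)^{m_i - m_*}$. The failure of the $(r^*-1)$-condition (which is $p_1 > \log n/(cn^{k-j})$ when $r^*=2$, and the $(r^*-1)$-fold product lower bound otherwise) combined with the $r^*$-condition forces
\[
p_{r^*} \le \frac{\prod_{i=1}^{r^*} p_i}{\prod_{i=1}^{r^*-1} p_i} < \frac{1}{n^{k-j-1}\log n},
\]
so that $\ell n^{k-j-1}p_i \le \binom{k}{j}$ for every $i \le r^*$. The residual factor is then at most $\binom{k}{j}^{\sum_{i\ne *}(m_i - m_*)}$, which is absorbed into $C^{\ell-1}$ after summing over $m_i \in [m_*, \ell-1]$ and over the $O((\log n)^{r^*})$ many $(\ell, m_1, \ldots, m_{r^*})$; this yields the total first-moment bound $o(1)$, analogously to the splitting into $q_1$ and $q_2$ in the two-colour argument.

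For the supercritical direction, I would carry over the double inductive reduction of Claims~\ref{claim:a} and~\ref{claim:b} to reduce the general $(k,j)$ case to the $s$-colour graph case $(k,j)=(2,1)$, observing that the link-hypergraph and split-vertex-set constructions preserve the full list of $s$-colour product lower bounds since the arithmetic of the new edge probabilities is the same in each colour. The base case --- $s$-colour graph percolation --- would itself be established by a further induction on $s$ with base $s=2$ given by Theorem~\ref{thm:graphs}(\ref{thm:graphs:supercrit}): split each $p_i$ into two independent rounds, use the first round to percolate the $(s-1)$-colour sub-process via the inductive hypothesis, and use the second round to supply the missing colour-$s$ edges for each remaining merge via a union bound, supported by the slack $(\log n)^{s-1}$ in the product condition $\prod_{i=1}^s p_i \ge c/(n(\log n)^{s-1})$. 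The main obstacle here is that percolation of the $(s-1)$-sub-process does not automatically yield percolation of the full $s$-colour process, so the second round must carefully ``upgrade'' the sub-process cluster structure to the $s$-process one; this upgrade should be feasible because the colour-$s$ edges are dense enough to complete all remaining merges via a union bound analogous to the ``good vertex'' argument of Claim~\ref{claim:b}.
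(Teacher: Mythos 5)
Your subcritical argument is sound and matches the paper's outline in all essentials. The paper derives the key upper bound $p_i = O\bigl(1/(n^{k-j-1}\log n)\bigr)$ from the necessity that every proper sub-collection of colours percolates (phrased as an induction on $s$); your derivation via the minimality of $r^*$ is the same arithmetic in contrapositive form, and your multi-colour version of Lemma~\ref{lem:constr} (one discovery colour, the rest behaving like blue with independent blueprints given $\vartheta$) is exactly what the paper intends. The reduction of the supercritical case from $(k,j,s)$ to $(2,1,s)$ via the analogues of Claims~\ref{claim:a} and~\ref{claim:b} is also fine, and your observation that the probability scalings respect all $r$-fold product conditions checks out.

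The genuine gap is in your base case for the supercritical direction, the $s$-colour graph case. The paper does \emph{not} prove this by induction on $s$; it reproves the argument of~\cite{BollobasRiordanSlivkenSmith17} from scratch in the $s$-colour setting (deferring details to~\cite{Gutierrez}), precisely because your proposed two-round scheme does not work. The difficulty is not merely that the upgrade is delicate: the final partition of the $(s-1)$-colour sub-process on the round-one edges is not a reachable state of the $s$-colour process, since its merges were performed without any colour-$s$ witness, so you cannot legitimately continue the $s$-colour process from that partition (contrast with Claim~\ref{claim:b}, where both rounds use the \emph{same} merge rule and $H_\alpha\subset H$ colourwise). If instead you try to certify each merge of the $(s-1)$-process history with a fresh colour-$s$ edge, the union bound fails at small scales: the early merges join clusters of bounded size, and the probability of a colour-$s$ edge between two such clusters is $O(\ell^2 p_s)$, which is $o(1)$ even for the largest admissible $p_s$ (e.g.\ for $s=3$ with all $p_i$ equal one has $p_i \approx n^{-1/3}(\log n)^{-2/3}$, so even $\ell=\log n$ gives $\ell^2 p_s = o(1)$). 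The $(\log n)^{s-1}$ slack in the product condition cannot rescue this, because the bottleneck of the jigsaw process requires all $s$ colours to cooperate simultaneously at every scale, which is exactly what the bootstrapping argument of~\cite{BollobasRiordanSlivkenSmith17} provides and what must be redone for $s$ colours. So this step needs to be replaced by a genuine multi-colour adaptation of that proof rather than an induction on the number of colours.
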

In other words, the threshold $\hat{p}_s=\hat{p}_s(n)$ for $j$-jigsaw percolation on the $s$-fold $k$-graph $H_k(n,p_1,\dots,p_s)$ in terms of $p=\prod_{i=1}^{s}p_i$ is of order
$$
\hat{p}_s=\Theta\left(\frac{1}{n^{s(k-j-1)+1}(\log n)^{s-1}}\right).
$$
The condition is stated for $2\le r\le s$ since percolation of any subset of $r$ colours is
necessary for the $s$-fold $k$-graph to percolate. The condition on $p_1$ corresponds to the case
$r=1$; the $j$-percolation process for one colour simply merges $j$-components into one cluster,
thus it is important whether any of the $p_i$ are below the $j$-connectedness threshold
given in~\eqref{eq:connthresh}.

{\bf Outline of proof.} It turns out that there is a minor technical obstacle when determining the upper bound on the probabilities $p_i$ for all $i\in[s]$ from some necessary conditions for $j$-percolation of an $s$-fold $k$-graph. As before the $k$-graph of (any) colour $i\in[s]$ has to be $j$-connected, i.e.\ by~\eqref{eq:connthresh} we may assume
$$
p_i=\Omega\left(\frac{\log n}{n^{k-j}}\right).
$$
However, this alone will not yield useful upper bounds. Instead we observe
that additionally, for any proper subset $I\subsetneq\{1,\dots,s\}$ of size
at least two, it is necessary that the $|I|$-fold $k$-graph $H_k(n,(p_i)_{i\in I})$
$j$-percolates. Theorem~\ref{thm:multi} is then proved by induction over $s$
with Theorem~\ref{thm:main}  providing the base case. Hence, assume
$\prod_{j\in[s]: j\neq i}p_j=\Omega(\hat{p}_{s-1})$ for all $i\in[s]$.
Additionally, by monotonicity, we may also assume $\prod_{j\in[s]}p_j=\Theta(\hat{p}_s)$
and thus obtain the following upper bounds on the probabilities $p_i$:
 $$
 p_i=\frac{\prod_{j\in[s]}p_j}{\prod_{j\in[s]: j\neq i}p_j}= O\left(\frac{\hat{p}_s}{\hat{p}_{s-1}}\right)=O\left(\frac{1}{n^{k-j-1}\log n}\right).
 $$
Even though this bound is slightly weaker than in the two-colour case
(where we had another factor of $1/\log n$), it turns out to be sufficient
for adapting our proof. More precisely, it is used in the following two arguments:
\begin{itemize}
\item in the subcritical regime, in order to derive~\eqref{eq:subupper}.
There we only need that $\ell n^{k-j-1}p_i\le 1$ for all $\ell\le \binom{k}{j}\log n$
and all colours $i\in [s]$. This holds if the constant $c(s,k,j)$ is chosen
sufficiently large compared to $c(s-1,k,j)$.
\item in the supercritical regime, for the last estimate in~\eqref{eq:superupper}.
The argument doesn't change at all, since having a single $(1/\log n)$-factor is already enough here.
\end{itemize}
Apart from these occurrences, the upper bound is only used to reprove the statement
(in the supercritical regime) for graphs~\cite{BollobasRiordanSlivkenSmith17}
in the setting of multiple colours.
Some of the asymptotic approximations used in the proof of the $2$-colour graph case
are no longer useful with only the weaker upper bound.
However, these technical issues can be dealt with and the details can be found in~\cite{Gutierrez}.

\section{Concluding remarks}

Theorem~\ref{thm:main} holds for a large enough constant $c$, which we made no attempt to optimise (nor was any such attempt made for graphs in~\cite{BollobasRiordanSlivkenSmith17}). It would be interesting to know the exact threshold, and in particular whether it is sharp, i.e. the upper and lower thresholds are asymptotically the same.

Note also that in the supercritical regime there was an extra condition on $\min \{p_1,p_2\}$ which contained a factor of $c$.
However, there is no intrinsic reason why this $c$ should be the same as the $c$ in the product. Indeed, the reason for this
condition is to ensure that each hypergraph is $j$-connected whp, but as mentioned in the introduction, the asymptotic
threshold for this was determined in~\cite{CooleyKangKoch15b} to be $\frac{j(k-j)!\log n}{n^{k-j}}$.
It is therefore natural to conjecture that this condition can be replaced by $\min \{p_1,p_2\} \ge \frac{c'\log n}{n^{k-j}}$
for any $c'> j(k-j)!$. Indeed, even if, say $p_1=\frac{j(k-j)!\log n}{n^{k-j}}$, there is a certain probability, bounded away
from $0$ and $1$, that the red $k$-graph is $j$-connected. It is then natural to conjecture that, conditioned on it being
$j$-connected (note that whp the blue $k$-graph will also be $j$-connected), whp the double $k$-graph $\dhknp$ $j$-percolates.

\section{Acknowledgements}
We would like to thank the anonymous referees for their comments and suggestions,
which improved the quality of the paper.
\

\bibliographystyle{amsplain}
\bibliography{References}

\
\end{document}